\def\0D{\Delta^{(0)}}
\def\1D{\Delta^{(1)}}
\newcommand{\Fa}{\mathfrak{a}}
\newcommand{\Cc}{\mathcal{C}}
\newcommand{\Tc}{\mathcal{T}}
\newcommand{\mc}{\mathcal{M}}
\newcommand{\Zc}{\mathcal{Z}}
\newcommand{\hmod}{{_H\mathcal{M}}}
\newcommand{\thmod}{{^{\#}_H\mathcal{M}}}
\newcommand{\tihmod}{{^{\#^i}_H\mathcal{M}}}
\newcommand{\tthmod}{{^{**}_H\mathcal{M}}}
\newcommand{\tinvmod}{{^{\#^{-1}}_H\mathcal{M}}}
\newcommand{\id}{1}
\newcommand{\vect}{\text{Vec}}
\newtheorem{theorem}{Theorem}[section]
\newtheorem{remark}[theorem]{Remark}
\newtheorem{proposition}[theorem]{Proposition}
\newtheorem{lemma}[theorem]{Lemma}
\newtheorem{example}[theorem]{Example}
\newtheorem{definition}[theorem]{Definition}
\def\build#1_#2^#3{\mathrel{\mathop{\kern 0pt#1}\limits_{#2}^{#3}}}
\newcommand{\ns}[1]{~\hspace{-3pt}_{\left<#1\right>}}
\newcommand{\ps}[1]{~\hspace{-3pt}^{^{\left(#1\right)}}}
\numberwithin{equation}{section}
\def\ot{\otimes}
\def\part{\partial}
\def\ra{\rightarrow}
\def\text{\hbox}
\def\ot{\otimes}
\def\ra{\rightarrow}
\def\Hom{\mathop{\rm Hom}\nolimits}
\def\Id{\mathop{\rm Id}\nolimits}
\def\build#1_#2^#3{\mathrel{
\mathop{\kern 0pt#1}\limits_{#2}^{#3}}}
\numberwithin{equation}{section}
\newcommand{\comment}[1]{\relax}
\begin{document}

\title{Monoidal Categories,  2-Traces, and Cyclic Cohomology}
\author{Mohammad Hassanzadeh, Masoud Khalkhali, Ilya Shapiro}
\date{}
\maketitle
\begin{abstract}
In this paper we show that to a unital associative algebra object (resp. co-unital co-associative co-algebra object)  of any abelian monoidal category $\Cc$ endowed with  a symmetric $2$-trace, one can attach a cyclic (resp. cocyclic) module, and therefore  speak of the cyclic (co)homology of the (co)algebra ``with coefficients in $F$". We observe that if $\mc$ is a $\Cc$-bimodule category equipped with a stable central pair then $\Cc$ acquires a symmetric 2-trace. The dual notions of symmetric $2$-contratraces and stable central contrapairs are derived as well. As an application we can recover all Hopf cyclic type (co)homology theories, obtain a conceptual understanding of anti-Yetter-Drinfeld modules, and give a formula-free definition of cyclic cohomology.  The machinery can also be applied in settings more general than Hopf algebra modules and comodules.

\end{abstract}
\medskip
{\it 2010 Mathematics Subject Classification.} monoidal category (18D10), abelian and additive category (18E05), cyclic homology (19D55), Hopf algebras	(16T05).

%%%%%%%%%%%%%%%%%%%%%%%%%%%%%%%%%%%%%%%%%%%%%%%%%%%%%%%%%%%%%%%%%%%%%%%%%%%%%%%%%%%%%%%%%%%%%%%%%%%%%%%%%%%%%%%%%%%%%%%%%%%%%%%%%%%%%%%%%%%%%%%%%%%%%%%%%%%%%%%%%%%%%%%%%%%%%%%%%%%%%%%%%%%%%%%%%%%%%%%%%%%%%%%%%%%%%%%%%%%%%%%%%%%%%%%%%%%%%%%%%%%%%%%%%
\section{ Introduction}

One of the major advances   in cyclic cohomology theory in recent years was
the introduction of a new cohomology theory for Hopf algebras by Connes and
Moscovici   and its extension to a cohomology theory for (co)algebras  endowed with  an action of a Hopf algebra and with coefficients in a local system \cite{CM98, hkrs1}.  The local systems  are closely related, and in a sense they are a mirror image of, Yetter-Drinfeld modules over a Hopf algebra.
Beyond Hopf algebras,  one often encounters interesting algebraic objects:  for example quasi Hopf algebras and weak Hopf algebras,  that only possess some of the axioms of Hopf algebras. Developing a Hopf cyclic-type theory for these Hopf-like objects is one of the motivations for this paper.

We find that the  language of monoidal categories is a suitable framework to discuss this question and to discover an answer.
The key point, for us, of the categorical machinery is the notion  of a trace.
Traces in  (monoidal) categories are well-known and have been used in different settings. In \cite{DP} the categorical definition of ordinary traces of square matrices are used for endomorphisms of a dualizable  object in a symmetric monoidal category.   In the derived category of a ring, traces are called Lefschetz numbers. Categorical traces are also used to study fixed-point theory \cite{P}.   The authors in \cite{JSV} showed that traces can be defined for dualizable objects in a monoidal category. On the other hand, if we think of a monoidal category as a bicategory with one object, then the notion of trace that we want is similar to the generalized traces of \cite{PS}.  Finally the authors in \cite{FSS} found a relationship between  the category-valued traces and the twisted center of a monoidal category.

The technical aspects of this paper would have been much more involved and would have required much delicacy, had the notions of bimodule categories and their centers not been already extensively studied.
In particular, as we realized, the center of a certain bimodule category of the monoidal category of (left) modules over a Hopf algebra (sometimes called the twisted center of the monoidal category) provides the suitable coefficients for Hopf cyclic cohomology.  These coefficients were already known, called (stable) anti Yetter-Drinfeld modules \cite{hkrs1}, but were defined much less conceptually. We use these (or dual) central elements  to form a suitable categorical trace to define the desired homological objects.

More precisely, in this paper we use the notion of a {\it symmetric $2$-trace} (compare with the shadow structure in \cite{PS})  for an abelian monoidal category to show that  for a  monoidal category  endowed with such a trace one can attach a cyclic module to any unital associative algebra object. Later we introduce the notion of stable central pair in a monoidal category which is a practical way of obtaining a symmetric $2$-trace.

If $\mc$ is a $\Cc$-bimodule category then so is $Fun(\mc, \vect)$; let   $\Zc_{\Cc}(\mc)$ and $\Zc_{\Cc}Fun(\mc, \vect)$ denote their respective centers. If  $F\in\Zc_{\Cc}Fun(\mc, \vect)$  and $m\in \Zc_{\Cc}(\mc)$ then the pair $(F, m)$ is called a stable central pair if it satisfies one additional mutual compatibility condition given in Definition \ref{stablepair}. We show that any such  pair gives us a symmetric $2$-trace.

As an example we see that the monoidal category $\hmod$ of left modules over a Hopf algebra $H$  can be endowed with a stable central pair and therefore a symmetric $2$-trace. To construct this pair, we consider the $\Cc$ bimodule category $\tinvmod$ where the right $\Cc$-action is given by the monoidal tensor product and the left action is twisted by $S^{-2}$, where $S$ is the antipode of $H$, as we will explain in Section \ref{ydsection}. Then if we start with a central element $M\in \Zc_{\Cc}(\tinvmod)$,  and take $$F:=\Hom_H(1,-)\in \Zc_{\Cc}Fun(\tinvmod, \vect),$$ we observe that  $(F, M)$ is a stable central pair (provided that an extra stability condition is satisfied). More interestingly, we prove that the elements of the center of the $\Cc$-bimodule category $\tinvmod$ are nothing but the ``duals" of the well-known anti Yetter-Drinfeld modules over $H$.  On the other hand, if we pursue a contravariant theory, then the bimodule category of interest is $\thmod$ and we show that $(F,M)$ is a stable central contrapair if $$F:=\Hom_H(-,1),$$ $M$ is the usual anti Yetter Drinfeld module and the stability condition is the usual one \cite{hkrs1}.

We recall that the center of a monoidal category has been studied for different reasons. It is known that the elements of the center of the monoidal category of modules over a Hopf algebra, a weak Hopf algebra, and a quasi Hopf algebra are in fact the Yetter-Drinfeld modules which are the solutions of the quantum Yang-Baxter equations. If we consider this monoidal category as a bimodule category over itself by the left and right actions given by the monoidal tensor product, then the Yetter-Drinfeld modules do indeed form the center of this bimodule category.

One notes that the language of monoidal categories is fundamental  to the  study of Hopf-like objects.   More precisely, for such an object, the category of left modules over it  is a  monoidal category.  Often the axioms that specify the type of the Hopf-like object are themselves dictated by exactly this requirement. There are a great many results about recovering the original Hopf-like object from its associated monoidal category provided that some extra structure (a variation on the fiber functor theme) is provided.  This explains the important relation between  monoidal categories and Hopf algebras. There are also other categorical approaches to cyclic homology \cite{BS}, \cite{KP}. We observe that our categorical machinery can be applied to  the monoidal categories associated to  interesting Hopf-like objects such as weak Hopf algebras, Hopf algebroids, quasi-Hopf algebras and Hopfish algebras to obtain homological constructions such as cyclic homology.\\

\textbf{Acknowledgments}:
The authors would like to thank  the organizers  of the ``Noncommutative Geometry Workshop"  at the University of Western Ontario, June 2015,  where  this paper began.
\bigskip

%%%%%%%%%%%%%%%%%%%%%%%%%%%%%%%%%%%%%%%%%%%%%%%%%%%%%%%%%%%%%%%%%%%%%%%%%%%%%%%%%%%%%%%%%%%%%%%%%%%%%%%%%%%%%%%%%%%%%%%%%%%%%%%%%%%%%%%%%%%%%%%%%%%%%%%%%%%%%%%%%%%%%%%%%%%%%%%%
%\tableofcontents
%%%%%%%%%%%%%%%%%%%%%%%%%%%%%%%
\subsection{Motivation}\label{motivation}

 The authors in \cite{hkrs1} introduced Hopf cyclic cohomology with coefficients for four  types of symmetries. In the case of a $H$-module coalgebra $C$, for a right-left stable anti Yetter-Drinfeld module (SAYD) over $H$, they assign a cocyclic module structure to $C^n=\Hom_H(k, M\ot C^{\ot n+1})$.  This theory generalizes Connes-Moscovici's  Hopf cyclic cohomology theory \cite{CM98}.  In the case of a $H$-module algebra $A$, and also for a right-left stable anti Yetter-Drinfeld module (SAYD) over $H$, they assign a cocyclic module structure to $C^n=\Hom_H(M\ot A^{\ot n+1}, k)$.
 The comodule part of the anti-Yetter-Drinfeld (AYD) module structure appears in the cyclic map $\tau$. The mysterious AYD structure has not been conceptually well-understood in the literature, although it is known that this structure is obtained by replacing the antipode $S$ by $S^{-1}$ in the definition of a Yetter-Drinfeld (YD) module. On the other hand, the YD modules are well-understood as they form the center of the monoidal category of $H$-modules, $\hmod$. Not only was the categorical meaning of AYD modules not understood, but also it was not clear   why such a mysterious structure is needed to obtain a cocyclic module and therefore cyclic cohomology.
\subsubsection{Contravariant cohomology theory.}
To answer this question, one can start from scratch and try to define the cyclic map $\tau$ on $C^n=\Hom_H(M\ot A^{\ot n+1}, k)$ directly.  This is the only significant addition to the already apparent cosimplicial structure (in the case that $A$ is an algebra, unital and associative of course).  More precisely, we need to \emph{slide the first copy of $A$ past $M$ and then to the back.} To understand the idea better let us consider a special case when the monoidal category is rigid, such is the category of \emph{finite} dimensional left modules over a Hopf algebra $H$. Later we will see that the finiteness assumption can be removed. Using the standard adjunction properties of rigidity, for any $V, W\in \hmod$, we have:

 $$\Hom_{H}(V\ot W, 1)\simeq \Hom_{H}(V, 1\ot {W^*})\simeq \Hom_{H}(V, {W^*}\ot 1)\simeq \Hom_H({W^{**}}\ot V, 1).$$

Thus for an $M\in \hmod$ and an algebra object $A$ (though the algebra structure plays no role at this stage) in $\hmod$ we obtain

\begin{equation}\label{adj22} \Hom_{H}(M\ot A^{\ot n+1}, 1)\simeq  \Hom_{H}({A^{**}}\ot M\ot A^{\ot n}, 1). \end{equation}

If we have an extra condition on $M$, namely that  \begin{equation}\label{centre22}{A^{**}}\ot M   \simeq  M\ot A ,\end{equation} then we obtain the desired $\tau$, i.e.,
$$\tau_n:\Hom_{H}(M\ot A^{\ot n+1}, 1)\simeq \Hom_{H}({A^{**}}\ot M\ot A^{\ot n}, 1) \simeq \Hom_{H}(M\ot A^{\ot n}\ot A,1), $$ where we first use \eqref{centre22}, followed by the inverse of \eqref{adj22}.
This suggests that for a rigid monoidal category $\Cc$
$$AYD(\Cc)=\{M \in \Cc : {A^{**}}\ot M  \xrightarrow{\sim}   M\ot A , \, \forall A\in \Cc\},$$
with some compatibility conditions.

Knowing  that $YD$ modules form the center of the  monoidal category, i.e., $YD=\mathcal{Z}(\Cc)$, we see that  $AYD$  is to $YD$ as ${A^{**}}\ot M \xrightarrow{\sim} M\ot A$ is to $A\ot  M\xrightarrow{\sim} M\ot A$. The stability condition in the case of the usual stable anti Yetter-Drinfeld  modules ensures that $\tau^{n+1}_n=\Id.$ To obtain the same conclusion in our general case leads us to the requirement that the single cyclic map $\tau_0$ be the identity. In summary, from the above considerations, we guess that whereas $YD= \mathcal{Z}( \hmod_{fd})$ (center of a monoidal category), $AYD=\mathcal{Z}({\tthmod}_{fd})$ (center of a bimodule category) where ${\tthmod}_{fd}$ is simply $\hmod_{fd}$ with the left action  modified by  ${(-)^{**}}$. This guess turns out to be correct.  Note that both $\tau_n$ and $$AYD(\Cc)=\mathcal{Z}_{\Cc}({^{**}\Cc})$$ make sense for any rigid category.
\subsubsection{Covariant cohomology theory.}
If instead we consider $C^n=\Hom_H(k, M\ot C^{\ot n+1})$ and try to define $\tau$ directly, we need to \emph{slide the first copy of $C$ past $M$ and then to the back.} Again let us consider the \emph{finite} dimensional left modules over a Hopf algebra $H$. Using the standard adjunction properties of rigidity, for any $V, W\in \hmod$, we have:

 $$\Hom_{H}(1, V\ot W)\simeq \Hom_{H}( 1\ot {^* W}, V)\simeq \Hom_{H}( {^* W}\ot 1, V)\simeq \Hom_H(1, {^{**}W}\ot V).$$
 
Thus for an $M\in \hmod$ and a coalgebra object $C$ (though the coalgebra structure plays no role at this stage) in $\hmod$ we obtain

\begin{equation}\label{adj2} \Hom_{H}(1, M\ot C^{\ot n+1})\simeq  \Hom_{H}(1, {^{**}C}\ot M\ot C^{\ot n}). \end{equation}

If we have an extra condition on $M$, namely that  \begin{equation}\label{centre2}{^{**}C}\ot M   \simeq  M\ot C ,\end{equation} then we obtain the desired $\tau$, i.e.,
$$\tau_n:\Hom_{H}(1, M\ot C^{\ot n+1})\simeq \Hom_{H}(1, {^{**}C}\ot M\ot C^{\ot n}) \simeq \Hom_{H}(1, M\ot C^{\ot n}\ot C), $$ where we first use the inverse of \eqref{centre2} followed by the inverse of \eqref{adj2}.
This suggests that for a rigid monoidal category $\Cc$ we also need
$$YD_1(\Cc)=\{M \in \Cc : {^{**}C}\ot M  \xrightarrow{\sim}   M\ot C , \, \forall C\in \Cc\},$$
with some compatibility conditions, to serve as coefficients.  Note that $YD_1$ is not the same as $AYD$, it is ``dual" to it.

\section{Preliminaries}\label{prelim}
Here we collect some background material that should facilitate the reading of this paper.  The content of Sections \ref{ydsection} and \ref{stabilitysection} is new.  The discussion involving Yetter-Drinfeld modules, anti-Yetter-Drinfeld modules, and their generalizations contained in Section \ref{ydsection} is especially important.  The conceptual reinterpretation of these objects and their associated complicated formulas was one of the motivations for this paper.

\subsection{(Co)cyclic modules.}
The main goal  of this  paper is to introduce a suitable categorical language to unify different notions of  cyclic homology under  a single theory. Therefore we need to  recall the definitions of cyclic and cocyclic modules from \cite{Connes-cyclic} and  \cite{loday}.

Recall that  the simplicial category $\Delta$ has as its objects non-negative integers considered as totally ordered sets $[n]=\{0,1,\cdots,n\}$ and its morphisms are non-decreasing functions $[n]\to[m]$.  A simplicial module is a  contravariant functor from $\Delta$ to $\vect$.  Similarly, a cosimplicial module is a covariant functor.  By keeping the same objects and adding cyclic permutations we obtain Connes cyclic category $C$.  A cyclic module is again a contravariant functor from $C$ to $\vect$, while a cocyclic module is a covariant one.  

More explicitly, a {\em cosimplicial module} is given by the data $(C_{n},\delta_{i}, \sigma_{i})$ where $\{C_{n}\}$, $n\geq 0$ is a sequence of vector spaces over the field $k$. The maps $\delta_{i}:C^{n}\rightarrow C^{n+1}$ are called cofaces, and $\sigma_{i}:C^{n}\rightarrow C^{n-1}$ are called codegeneracies. These are $k$-linear maps satisfying the following cosimplicial relations:
\begin{align}\label{rel1}
\begin{split}
\delta_{j}  \delta_{i} &= \delta_{i} \delta_{j-1},  \quad   i <j,\\
\sigma_{j}  \sigma_{i} &= \sigma_{i} \sigma_{j+1},    \quad i \leq j,\\
\sigma_{j} \delta_{i} &=
 \begin{cases}
\delta_{i} \sigma_{j-1},   &
 i<j\\
\text{Id},   &   i=j \,\,\text{or}\,\, i=j+1,\\
\delta_{i-1} \sigma_{j},  & i>j+1.
\end{cases}
\end{split}
\end{align}

A {\em cocyclic module }  is a cosimplicial module equipped with the extra morphisms $\tau_n :C^n\rightarrow C^n$, called cocyclic maps such that the following relations hold:
\begin{align}\label{rel2}
\begin{split}
\tau_{n}\delta_{i}&=\delta_{i-1} \tau_{n-1}, \quad 1\le i\le  n,\\
\tau_{n} \delta_{0}&=\delta_{n},\\
\tau_n \sigma_{i}&=\sigma_{i-1} \tau_{n+1}, \quad 1\le i\le n,\\
\tau_n^{n+1}&=\Id.\\
\end{split}
\end{align}

In a dual manner, one can define a cyclic module as a simplicial module with extra cyclic maps. More precisely,
 a cyclic module is given by the data  $(C_{n},\delta_i, \sigma_{i}, \tau_{n})$, where $C_{n}$, $n\geq 0$ is a $k$-vector space and  $\delta_{i}: C_{n}\rightarrow C_{n-1},  \quad \sigma_{i}:C_n \ra C_{n+1},  \quad 0 \leq i \leq n$, and  $\tau_n:C_n \ra C_n$, are called faces, degeneracies and cyclic maps respectively. These are $k$-linear maps, satisfying the following  relations:
\begin{align}\label{rel11}
\begin{split}
\delta_{i} \delta_{j}&=  \delta_{j-1} \delta_{i},  \quad i <j,\\
\sigma_{i}  \sigma_{j}&= \sigma_{j+1} \sigma_{i}, \quad i \leq j,\\
\delta_{i} \sigma_{j} &=
 \begin{cases}
\sigma_{j-1} \delta_{i},   & i<j,\\
\Id,    & i=j \,\,\text{or}\,\, i=j+1,\\
\sigma_{j} \delta_{i-1},  & i>j+1.
\end{cases}
\end{split}
\end{align}
and
\begin{align}\label{rel22}
\begin{split}
\delta_{i}\tau_n&= \tau_{n-1}\delta_{i-1}, \quad 1\le i\le n,\\
\delta_{0} \tau_n&= \delta_{n}\\
\sigma_{i} \tau_n&= \tau_{n+1}  \sigma _{i-1},\quad 1\le i\le n,\\
\tau_n^{n+1}&= \Id. \\
\end{split}
\end{align}
One notes that the relation $\sigma_{0} \tau_n = \tau_{n+1}^2  \sigma_{n}$ that is usually listed along with the above is an extra relation \cite[section 5.2]{loday} which can be obtained from  $\tau^{n+1}_n=\Id$ and $\sigma_{i} \tau_n= \tau_{n+1}  \sigma _{i-1}$. Similarly for a cocyclic module $\tau_n \sigma_{0} = \sigma_{n} \tau_{n+1}^2$ can be obtained from  the other relations.
From a (co)cyclic module, one can define  Hochschild, cyclic and periodic cyclic (co)homology \cite{loday}.

\subsection{$H$-modules, $H$-comodules, and compatibility conditions.}\label{ydsection}
Recall that the center of the monoidal category  of left $H$-modules, $\hmod$,  is equivalent to the category of left-right  Yetter-Drinfeld modules. For finite dimensional Hopf algebras  the center $\mathcal{Z}(\hmod)$ is also equivalent to the representations of the quantum double $_{D(H)}\mathcal{M}$. For details, we refer the reader to \cite{kassel}.
We recall from \cite{majid,sch} that for a Hopf algebra (or a bialgebra) $H$, a left $H$-module, right $H$-comodule $M$ is called a Yetter-Drinfeld module if
\begin{equation}\label{YD1}
  h\ps{1}m\ns{0}\ot h\ps{2} m\ns{1}=(h\ps{2} m)\ns{0}\ot (h\ps{2} m)\ns{1}h\ps{1}.
\end{equation}
For Hopf algebras with an invertible antipode this is equivalent to
\begin{equation}\label{YD2}
  (hm)\ns{0}\ot (hm)\ns{1}= h\ps{2}m\ns{0}\ot h\ps{3}m\ns{1}S^{-1}(h\ps{1}).
\end{equation}
The isomorphism giving the central structure of a left right YD module $M$ is \begin{align*}V\ot M&\xrightarrow\sim M\ot V\\v\ot m&\mapsto m\ns{0}\ot m\ns{1}v\end{align*} and the YD condition simply ensures that the map above is that of $H$-modules.

We recall  from \cite{hkrs2}  that   a left-right anti Yetter-Drinfeld module $M$ over a Hopf algebra $H$   is  a left $H$-module and a right $H$-comodule satisfying
 \begin{equation}\label{AYD1}
   (hm)\ns{0}\ot (hm)\ns{1}=  h\ps{2}m\ns{0}\ot h\ps{3}m\ns{1}S(h\ps{1}).
 \end{equation}
We denote the category of left-right AYD modules over a Hopf algebra $H$ by  $_H\mathcal{AYD}^H$.  Note that there are three additional flavors of AYD modules: left-left, right-left, right-right.  All of them are equivalent and thus we focus only on the left-right variety.  We will need to generalize.

\begin{definition}
Let $M$ be a left module and a right comodule over $H$, and let $i\in\mathbb{Z}$.  We say that $M\in {_H\mathcal{YD}}^H_i$ if \begin{equation}\label{genyd1}(h\ps{2}m)\ns{0}\ot (h\ps{2}m)\ns{1} S^{-2i}(h\ps{1})=h\ps{1}m\ns{0}\ot h\ps{2}m\ns{1}.\end{equation}  We say that $M$ is a generalized Yetter-Drinfeld module.
\end{definition}

The following  lemma provides a characterization of the generalized Yetter-Drinfeld modules akin to the one above for the Yetter-Drinfeld modules.

\begin{lemma}
For a Hopf algebra $H$ the generalized $i$th YD condition \eqref{genyd1} is  equivalent to
\begin{equation}\label{genyd2}
  \rho(hm)=(hm)\ns{0}\ot(hm)\ns{1}=h\ps{2}m\ns{0}\ot h\ps{3}m\ns{1} S^{-1-2i}(h\ps{1}).
  \end{equation}
 \end{lemma}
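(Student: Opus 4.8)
The plan is to show the equivalence of \eqref{genyd1} and \eqref{genyd2} by a direct manipulation using the Hopf algebra axioms, exactly mirroring the classical argument that turns \eqref{YD1} into \eqref{YD2} and \eqref{AYD1} into its alternate form. The two conditions differ only in where the antipode power sits: \eqref{genyd1} has $S^{-2i}(h\ps{1})$ multiplied \emph{after} the comodule structure map is applied to $h\ps{2}m$ and moved to the right of the comodule leg, whereas \eqref{genyd2} applies the comodule map directly to $hm$ and has $S^{-1-2i}(h\ps{1})$ on the far right. So the engine of the proof is the standard fact that $\rho$ is an algebra map in the appropriate sense together with counit and antipode identities.

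First I would prove the direction \eqref{genyd1} $\Rightarrow$ \eqref{genyd2}. Starting from $\rho(hm) = (hm)\ns{0}\ot(hm)\ns{1}$, I would insert $1 = h\ps{1} S^{-2i}(h\ps{2})\cdot$(something) — more precisely, rewrite $h = h\ps{1}\e(h\ps{2}) = h\ps{1} S^{-2i}(h\ps{2})h\ps{3}$ using that $S$ and hence $S^{-2i}$ is a (twisted) antipode, i.e. $h\ps{1} S^{-2i}(h\ps{2}) h\ps{3}$... one must be careful: the identity $m \ns{0} S(m\ns{1}) \dots$ is not what we want; rather I use $\e(h) = S^{-2i}(h\ps{1}) h\ps{2}$ only when $S^{-2i}$ is a genuine antipode, which holds since $S^2$ is an algebra automorphism (as $H$ is a Hopf algebra with $S$ the unique antipode), so $S^{-2i}$ satisfies $S^{-2i}(a\ps{1})a\ps{2} = \e(a) = a\ps{1}S^{-2i}(a\ps{2})$ after twisting appropriately. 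Concretely: write $hm = h\ps{1}\e(h\ps{2})m$ is too crude; instead apply \eqref{genyd1} to the element $h\ps{2} m$ with $h\ps{1}$ on the side, then multiply both sides on the right of the second tensor leg by $S^{-1}(\text{remaining factor})$ and collapse using $h\ps{i} S^{-1}(h\ps{j})$-type identities. The cleanest route: apply $\id \ot (\id \cdot S^{-1-2i}(h\ps{1}))$ to \eqref{genyd1} reindexed, using coassociativity to split $h$ into three legs, and then use $h\ps{2} S^{-2i}(h\ps{3})$... I would organize the bookkeeping so that the left-hand comodule map $\rho$ is applied to $h\ps{2}m$, the factor $S^{-2i}(h\ps{1})$ from \eqref{genyd1} gets absorbed, and an extra $h\ps{3}$ and $S^{-1}(h\ps{?})$ pair cancels via the antipode axiom to leave precisely \eqref{genyd2}.

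For the converse \eqref{genyd2} $\Rightarrow$ \eqref{genyd1}, I would reverse the computation: take \eqref{genyd2}, replace $h$ by $h\ps{2}$ with leftover $h\ps{1}$, multiply the comodule leg on the right by $S^{-2i}(h\ps{1})$, and use that $S^{-1-2i}(h\ps{1}) h\ps{2}$ or the corresponding convolution-inverse identity collapses the extra antipode factors, recovering $h\ps{1}m\ns{0}\ot h\ps{2}m\ns{1}$ on one side and the $S^{-2i}$-twisted expression on the other. Both directions are formally the same calculation read in opposite order, so I would present one in detail and remark that the other follows by inverting each step, noting that every step is reversible because each use of the antipode axiom $a\ps{1}S(a\ps{2}) = \e(a) = S(a\ps{1})a\ps{2}$ (and its $S^{-1}$, $S^{\pm 2i}$ analogues) is an identity, not an inclusion.

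The main obstacle is purely bookkeeping: keeping track of which Sweedler leg of $h$ carries which power of $S$, and making sure the splitting of $h$ via coassociativity is done at the right multiplicity (one needs three or four legs of $h$ and two or three legs of $m\ns{1}$ implicitly through iterating $\rho$). The conceptual content is minimal — it is the observation that $S^{2}$ being an algebra automorphism makes $S^{-2i}$ behave like an antipode for the purposes of these cancellations — but the index juggling must be done carefully, in particular verifying that the exponent shifts by exactly $1$ (from $S^{-2i}$ to $S^{-1-2i}$) and not by $2$, which is forced by the single antipode axiom application that converts between ``$\rho$ applied after multiplication by $h\ps{2}$'' and ``$\rho$ applied to $hm$ directly.''
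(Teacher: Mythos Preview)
Your plan is correct and matches the paper's proof: in each direction one replaces $h$ by $h\ps{2}$ (carrying along $h\ps{1}$), applies the assumed condition, and then collapses the extra factors using the identity $S^{-1-2i}(h\ps{2})S^{-2i}(h\ps{1})=\varepsilon(h)$ (and its companion $S^{-2i}(h\ps{2})S^{-1-2i}(h\ps{1})=\varepsilon(h)$), which is exactly the algebra automorphism $S^{-2i}$ applied to $S^{-1}(h\ps{2})h\ps{1}=\varepsilon(h)$.

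One caution for when you execute: your intermediate claim that $S^{-2i}(a\ps{1})a\ps{2}=\varepsilon(a)=a\ps{1}S^{-2i}(a\ps{2})$ is \emph{false} in general --- $S^{-2i}$ is an algebra automorphism, not an antipode --- so the cancellation step must use the mixed-exponent identity above, not this one. Your later summary (``$S^2$ an algebra automorphism makes $S^{-2i}$ behave like an antipode for these cancellations'') is the right intuition, but make sure the written-out computation uses $S^{-1-2i}$ and $S^{-2i}$ together rather than $S^{-2i}$ alone.
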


Thus ${_H\mathcal{YD}}^H_0={_H\mathcal{YD}}^H$, while ${_H\mathcal{YD}}^H_{-1}={_H\mathcal{AYD}}^H$, and we will also need ${_H\mathcal{YD}}^H_1$ to serve as coefficients of the covariant theory.  Note that if instead of $\hmod$ we consider $\hmod_{fd}$ then $\mathcal{YD}_1=\mathcal{AYD}^*=\mathcal{AYD}\, contramodules$.

 \begin{proof}
 First, we show that $\eqref{genyd2}\Rightarrow\eqref{genyd1}$:
 \begin{align*}
   (h\ps{2}m)\ns{0}\ot (h\ps{2}m)\ns{1} S^{-2i}(h\ps{1})  &=h\ps{3}m\ns{0}\ot h\ps{4} m\ns{1}S^{-1-2i}(h\ps{2})S^{-2i}(h\ps{1})\\
   %&=h\ps{-1}m\ns{-1}S^{-1}(S^{-1}(h\ps{4})h\ps{3})\ot h\ps{2}m\ns{0}\\
   &=h\ps{2}m\ns{0}\ot h\ps{3}m\ns{1}\varepsilon(h\ps{1})\\
   &=h\ps{1}m\ns{0}\ot h\ps{2}m\ns{1}.
 \end{align*}
 Now we show that $\eqref{genyd1}\Rightarrow\eqref{genyd2}$:
 \begin{align*}
   h\ps{2}m\ns{0}\ot h\ps{3}m\ns{1}S^{-1-2i}(h\ps{1})   &=(h\ps{3}m)\ns{0}\ot (h\ps{3}m)\ns{1}S^{-2i}(h\ps{2})S^{-1-2i}(h\ps{1})\\
   %&=(h\ps{1}m)\ns{-1}S^{-1}(h\ps{3}S^{-1}(h\ps{2}))\ot (h\ps{1}m)\ns{0}\\
   &=(h\ps{2}m)\ns{0}\ot (h\ps{2}m)\ns{1}\varepsilon(h\ps{1})\\
   &=\varepsilon(h\ps{1})\rho(h\ps{2}m)\\
   &=\rho(\varepsilon(h\ps{1})h\ps{2}m)\\
   &=\rho(hm).
 \end{align*}

 \end{proof}

If $\Cc$ is a monoidal category, $\mc$ a $\Cc$-bimodule category, and $F: \mc\rightarrow \mc$ a monoidal endofunctor, then we use $\mc^F$ and ${^F\mc}$ to denote the bimodule categories with the right and respectively left  actions   twisted by $F$. More precisely, for $V,W\in\Cc$ and $M\in{^F\mc}$ we have $$V\cdot_{new} M\cdot_{new} W=F(V)\cdot_{old} M\cdot_{old} W,$$ with $\mc^F$ defined analogously.  Note that if $F$ is an equivalence then ${^F\mc}\simeq\mc^{F^{-1}}$.

Let  $\#: \hmod \rightarrow \hmod$ be the functor which takes a left $H$-module $M$ to  $M^\#\in \hmod$ where $M^\#$ is the same as $M$ as a vector space but the left action is modified by $S^{2}$, i.e., is now given by $h\cdot m= S^{2}(h)m$.  If, as we always assume, $S$ is invertible, then $\#$ is an autoequivalence of $\hmod$.  Thus for $i\in\mathbb{Z}$ we can consider $\tihmod$, i.e., $$V\cdot M\cdot W=V^{\#^i}\ot M\ot W.$$

We can now repeat verbatim the same arguments  as in the usual, YD modules Vs center, case.  Roughly speaking, let $M$ be in $\Zc_{\hmod}\tihmod$, then $M$ is already a left $H$ module and for every $V\in\hmod$ we have an isomorphism $\Phi:V^{\#^i}\ot M\xrightarrow\sim M\ot V$.  Take $V=H$ and define the right comodule structure on $M$ via $$\rho(m)=\Phi(1\ot m).$$  Conversely, suppose that $M$ is in ${_H\mathcal{YD}}^H_{-i}$.  Then for every $V\in\hmod$ define $\Phi$ by \begin{equation}\label{mcent}\Phi(v\ot m)=m\ns{0}\ot m\ns{1}v.\end{equation}  Note that the requirement that $\Phi$ be an $H$ module map is exactly the equation \eqref{genyd1}.  Furthermore, $$\Phi^{-1}(m\ot v)=S(m\ns{1})v\ot m\ns{0}.$$  We have arrived at the following theorem:

\begin{theorem}\label{ydcenter}
For a Hopf algebra $H$ with an invertible antipode and $i\in\mathbb{Z}$,

$$\Zc_{\hmod}\tihmod\simeq {_H\mathcal{YD}}^H_{-i}.$$

\end{theorem}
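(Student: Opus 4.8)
The plan is to set up a pair of mutually inverse functors between $\Zc_{\hmod}\tihmod$ and ${_H\mathcal{YD}}^H_{-i}$ exactly as sketched in the paragraph preceding the statement, and then check that the required structures and compatibilities match up. First I would recall what an object of the center $\Zc_{\hmod}\tihmod$ consists of: an object $M\in\hmod$ (equivalently $M^{\#^i}\in\hmod$, the data is the same vector space with the twisted action) together with a natural family of $H$-module isomorphisms $\Phi_V\colon V^{\#^i}\ot M\xrightarrow{\sim} M\ot V$ satisfying the hexagon/compatibility axiom $\Phi_{V\ot W}=(\id_M\ot\,?)\circ\cdots$ with respect to the tensor product of the right action, plus normalization $\Phi_{1}=\id$. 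Going one direction, given such $M$ I would define the right $H$-comodule structure by $\rho(m)=\Phi_H(1\ot m)$, where $1\in H=H^{\#^i}$ is the unit; coassociativity of $\rho$ will follow from the hexagon axiom applied to $V=W=H$ together with the naturality of $\Phi$ with respect to the multiplication $H\ot H\to H$ and the counit $H\to 1$, and counitality from $\Phi_1=\id$ and naturality along $\e\colon H\to 1$. The fact that $M$ then lies in ${_H\mathcal{YD}}^H_{-i}$, i.e. satisfies \eqref{genyd1} with $i$ replaced by $-i$, is precisely the statement that $\Phi_H$ is a morphism of $H$-modules, once one writes out the $H$-action on $H^{\#^i}\ot M$ (which involves $S^{2i}$ on the $H$-leg) and on $M\ot H$.

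Conversely, starting from $M\in{_H\mathcal{YD}}^H_{-i}$ I would define $\Phi_V(v\ot m)=m\ns{0}\ot m\ns{1}v$ for every $V\in\hmod$, exactly as in \eqref{mcent}. Here the key points to verify are: (a) $\Phi_V$ is $H$-linear as a map $V^{\#^i}\ot M\to M\ot V$ — this is again \eqref{genyd1} for $-i$, and is the content of the displayed computation the paper already sets up; (b) $\Phi_V$ is natural in $V$ — immediate since the formula only uses the comodule structure of $M$ and the $H$-module structure of $V$; (c) $\Phi_V$ is invertible with inverse $\Phi_V^{-1}(m\ot v)=S(m\ns{1})v\ot m\ns{0}$ — this uses the comodule axioms and the antipode identities $S(h\ps1)h\ps2=\e(h)=h\ps1 S(h\ps2)$, and is where invertibility of $S$ is implicitly convenient though for this particular inverse one only needs $S$ itself; (d) the hexagon/compatibility axiom for $\Phi$ with respect to $\ot$ holds — this reduces to coassociativity of $\rho$ and the fact that the right action is plain $\ot$; and (e) the normalization $\Phi_1=\id$. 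Then I would check the two constructions are mutually inverse: starting from a central structure, extracting $\rho$ via $V=H$, and re-building $\Phi$ by \eqref{mcent} returns the original $\Phi$ because a natural transformation out of the tensor-product action is determined by its value at $H$ (every $V$ is a quotient of a free module $H\ot V_{\mathrm{triv}}$, and naturality plus $H$-linearity pin $\Phi_V$ down); and the round trip on comodule structures is the identity by construction.

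Most of this is bookkeeping; the one genuinely substantive point, and the place I would slow down, is step (a)/(d): matching the \emph{$H$-module} structure on $V^{\#^i}\ot M$ — where $H$ acts on the $V$-leg through $S^{2i}$ — against the comodule-twisted formula $m\ns0\ot m\ns1 v$, and seeing that $H$-linearity of $\Phi_V$ is \emph{exactly} \eqref{genyd1} with exponent $-2i$ rather than $+2i$. In other words, one must be careful that the twist by $\#^i$ on the left action of the bimodule category $\tihmod$ corresponds, under $\rho(m)=\Phi_H(1\ot m)$, to the condition defining ${_H\mathcal{YD}}^H_{-i}$ and not ${_H\mathcal{YD}}^H_{i}$; tracking this sign/inversion correctly is where the bijection's decoration $-i$ in the statement comes from. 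Once that correspondence is pinned down, together with the earlier Lemma relating \eqref{genyd1} and \eqref{genyd2}, the rest follows by the same verbatim arguments as in the classical Yetter–Drinfeld-versus-center case, which the paper has already invoked.
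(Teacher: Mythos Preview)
Your proposal is correct and follows exactly the approach the paper sketches in the paragraph preceding the theorem: define $\rho(m)=\Phi_H(1\ot m)$ in one direction, $\Phi_V(v\ot m)=m\ns{0}\ot m\ns{1}v$ in the other, and check that $H$-linearity of $\Phi_V$ is precisely \eqref{genyd1} with index $-i$. In fact you supply more detail (coassociativity via the hexagon, the reconstruction argument via free modules, the explicit sign-tracking for the $-i$) than the paper itself, which simply asserts that the classical YD-versus-center argument goes through verbatim.
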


\begin{remark}
If we consider the action of  $\mathbb{Z}$ on $\hmod$ via $\#$, then $\hmod\rtimes\mathbb{Z}$ is a $\mathbb{Z}$ graded monoidal category.  If we write $\hmod\rtimes\mathbb{Z}=\bigoplus_i\mc_i$ then $\mc_i=\hmod^{\#^i}$ as an $\hmod$ bimodule category and $\bigoplus_i{_H\mathcal{YD}}^H_{i}$ is a $\mathbb{Z}$-equivariant $\mathbb{Z}$-braided monoidal category.
\end{remark}

\subsection{Stability.}\label{stabilitysection}

Recall that a left $H$ module and right $H$ comodule $M$ is called stable if $m\ns{1}m\ns{0}=m$.  We will need a slightly more general notion for the covariant theory.  The classical stability will be precisely correct for the contravariant version.

\begin{definition}
Let $i\in\mathbb{Z}$. A left $H$ module and right $H$ comodule $M$ is called $i$-stable if $S^{2i}(m\ns{1})m\ns{0}=m$.
\end{definition}

Thus the usual stability is now $0$-stability.  The following lemma shows what happens to the odd powers of the antipode.

\begin{lemma}\label{stability}
For a left $H$ module and right $H$ comodule $M$ we have $$S^{2i}(m\ns{1})m\ns{0}=m\quad\Leftrightarrow\quad S^{2i-1}(m\ns{1})m\ns{0}=m.$$
\end{lemma}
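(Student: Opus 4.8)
The plan is to reduce the claimed equivalence to a single application of the antipode together with the right comodule coassociativity axiom, exactly the kind of manipulation used in the proof of the previous lemma. First I would prove the implication $S^{2i}(m\ns{1})m\ns{0}=m \Rightarrow S^{2i-1}(m\ns{1})m\ns{0}=m$. Starting from $S^{2i-1}(m\ns{1})m\ns{0}$, I would use coassociativity of the coaction to split $\rho$ once more, writing the comodule element as $m\ns{0}\ot m\ns{1}\ot m\ns{2}$ (in Sweedler-type notation for the iterated coaction), so that $S^{2i-1}(m\ns{1})m\ns{0}$ becomes $S^{2i-1}\bigl((m\ns{0})\ns{1}\bigr)(m\ns{0})\ns{0}$ after reindexing. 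The trick is then to apply the antipode identity in a disguised form: one wants to insert $S^{2i}$ acting on one leg and absorb it using the hypothesis. Concretely, I expect to apply $S$ (an odd power) to the whole expression $S^{2i}(m\ns{1})m\ns{0}=m$ is not directly available, so instead I would argue as follows.

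The cleanest route is to observe that both conditions can be phrased as fixed-point conditions for an operator on $M$, and that these two operators are intertwined by $S$. Define $T_k(m) = S^{k}(m\ns{1})m\ns{0}$. Using that $\rho$ is a coaction (coassociativity and counitality) and that $S$ is an anti-coalgebra, anti-algebra map, one checks the semigroup-like identity $T_{k}\circ T_{\ell}$ relating to $T_{k+\ell}$ up to a shift — more precisely I would verify that $T_{2i}(m)=m$ forces $T_{2i-1}(m)=m$ by computing $T_{2i-1}(T_{2i}(m))$ or $T_{-1}(T_{2i}(m))$ and simplifying with the antipode axiom $S(m\ns{1})m\ns{0}\ns{?}$-type contraction. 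The key computational input is the standard identity that for any comodule, $S(m\ns{1})m\ns{0}$ and $m\ns{1}S(m\ns{0}\cdot\text{?})$ collapse via $\sum S(c\ps{1})c\ps{2}=\varepsilon(c)1$ applied to the relevant leg after using coassociativity to create adjacent $S(x\ps{1})$ and $x\ps{2}$ factors.

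The reverse implication $S^{2i-1}(m\ns{1})m\ns{0}=m \Rightarrow S^{2i}(m\ns{1})m\ns{0}=m$ goes symmetrically, applying $S$ once more to move from the odd power $2i-1$ to the even power $2i$, again using coassociativity to produce a cancelling pair $S(x\ps{1})x\ps{2}$ or $x\ps{1}S(x\ps{2})$. I would present both directions by the same two-line Sweedler calculation, as in the proof of the generalized YD lemma above. The main obstacle — really the only subtlety — is bookkeeping: getting the indices of the iterated coaction and the nested powers of $S$ to line up so that exactly one antipode-counit contraction fires, leaving the hypothesis applicable to the remaining leg. Once the correct insertion point for the extra coassociativity split is identified, the rest is routine. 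I do not expect invertibility of $S$ to be needed for this particular lemma, only that $S$ is an algebra/coalgebra anti-homomorphism satisfying the antipode axiom, though it is assumed throughout anyway.
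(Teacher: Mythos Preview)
Your high-level intuition is correct and matches the paper: the lemma is a two-line Sweedler computation using coassociativity of the coaction, the antipode axiom, and one insertion of the hypothesis. However, the concrete route you settle on --- the operators $T_k(m)=S^k(m\ns{1})m\ns{0}$ and the study of $T_{2i-1}\circ T_{2i}$ --- has a genuine gap. To compute $T_{2i-1}(T_{2i}(m))$ nontrivially you would need to evaluate $\rho\bigl(S^{2i}(m\ns{1})m\ns{0}\bigr)$, i.e.\ $\rho(hm)$ for $h\in H$; but the lemma assumes no compatibility whatsoever between the module and comodule structures on $M$, so there is no formula for this. The only thing you can do with $T_{2i}(m)$ is replace it by $m$ using the hypothesis, which leaves you with $T_{2i-1}(m)$ and is circular. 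The ``semigroup-like identity'' you allude to simply does not exist here.

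The move that works is the one you mention first and then abandon: apply the hypothesis not to $m$ but to the element $m\ns{0}$. Concretely, in $S^{2i-1}(m\ns{1})m\ns{0}$ replace $m\ns{0}$ by $S^{2i}\bigl((m\ns{0})\ns{1}\bigr)(m\ns{0})\ns{0}$, reindex by coassociativity to $S^{2i-1}(m\ns{2})S^{2i}(m\ns{1})m\ns{0}$, use that $S^{2i-1}$ is an anti-homomorphism to rewrite the two $S$-factors as $S^{2i-1}\bigl(S(m\ns{1})m\ns{2}\bigr)$, and collapse with $S(h\ps{1})h\ps{2}=\varepsilon(h)$. The reverse implication is the same with $m\ns{1}S(m\ns{2})=\varepsilon(m\ns{1})$ in place of $S(m\ns{1})m\ns{2}$. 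This is exactly the paper's proof. Your remark that invertibility of $S$ is not needed is correct.
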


\begin{proof}
This is a direct computation, however in the instances where we see it, a more conceptual explanation can be found in terms of the $\tau_0$ map and its inverse that play a key role in our more conceptual understanding of stability.

``$\Rightarrow$"
\begin{align*}
S^{2i-1}(m\ns{1})m\ns{0}&=S^{2i-1}(m\ns{2})S^{2i}(m\ns{1})m\ns{0}\\
&=S^{2i-1}(S(m\ns{1})m\ns{2})m\ns{0}\\
&=\varepsilon(m\ns{1})m\ns{0}\\
&=m.
\end{align*}

``$\Leftarrow$"
\begin{align*}
S^{2i}(m\ns{1})m\ns{0}&=S^{2i}(m\ns{2})S^{2i-1}(m\ns{1})m\ns{0}\\
&=S^{2i-1}(m\ns{1}S(m\ns{2}))m\ns{0}\\
&=\varepsilon(m\ns{1})m\ns{0}\\
&=m.
\end{align*}
\end{proof}

\section{Monoidal categories  and 2-traces}\label{moncat2tr}
This section develops the core of the conceptual machinery that we need in order to understand the Hopf-type cyclic homology theories.  For convenience we start with the covariant case and derive the contravariant case from it.  We note that ignoring the non-strictness of the monoidal category by suppressing the explicit formulas for associators would have cleaned up the exposition.  Our choice to include them was motivated by future applications of this machinery to monoidal categories where the associator appears as an explicit formula and so would have to appear in the definitions of the cyclic structure once it is unpacked from the conceptual definitions. The ease with which such laborious formulas are safely hidden from view demonstrates the power of the categorical machinery.

Let $(\mathcal{C}, \ot)$ be a monoidal category. We will need the following conventions.  Let $A$ be an object in $\Cc$, by $A^{\ot n}$ we mean an object defined inductively as $$A^{\ot n}=A^{\ot n-1}\ot A.$$  For $\vec{n}=(n_1,\cdots, n_k)$ with $n_i$ non-negative integers, by $A^{\ot \vec{n}}$ we denote an object defined inductively as $$A^{\ot \vec{n}}=A^{\ot (n_1,\cdots, n_{k-1})}\ot A^{\ot n_k}.$$  We interpret $A^{\ot 0}$ as the unit object  $\id$.  We apply a similar convention to morphisms.  Let $|\vec{n}|=n_1+\cdots+n_k.$ Then for $\vec{n}$ and $\vec{m}$ with $|\vec{n}|=|\vec{m}|$ denote by $$\Fa_{\vec{n}}^{\vec{m}}: A^{\ot\vec{n}}\rightarrow A^{\ot\vec{m}}$$ the unique isomorphism ensured by the monoidal structure.  Omitting the brackets enclosing the vector components to reduce clutter, we thus have $\Fa_{n+1}^{n,1}=Id$, while $\Fa_{n,1}^{1,n}$ is in general highly non-trivial  and will play a central role below.  Though $\Fa_{n,1}^{1,n}$ is invisible for Hopf algebras, it will be needed for quasi-Hopf algebras and similar objects which lack ``on the nose" coassociativity.
\subsection{Symmetric 2-traces}
Let $\vect$ be the  category of vector spaces, and $\mathcal{M}$ be a $\Cc$-bimodule category. Then the functor category $Fun(\mathcal{M}, \vect)$ is a $\mathcal{C}$-bimodule category with the left and right actions defined by

\begin{equation}
  c\cdot F(-):= F(-\cdot c), \quad \text{and} \quad F\cdot c(-):= F(c\cdot -)
\end{equation}
for all $c\in \Cc$.  The center of a $\Cc$-bimodule category $\mathcal{M}$ is denoted by $\Zc_{\Cc}\mathcal{M}$. Since $\Cc$ is a $\Cc$-bimodule category using its tensor product, we can set $\mathcal{M}=\Cc$. To simplify the notation the center of a monoidal category  $\Cc$ will be denoted by $\Zc(\Cc)$.

\begin{definition}
Let $(\mathcal{C}, \ot)$ be a monoidal category.
\begin{itemize}
\item A functor
$$F\in \mathcal{Z}_{\mathcal{\Cc}}Fun(\mathcal{C}, \vect),$$ is called a 2-trace.  In particular we have natural isomorphisms
\begin{equation}
  \iota_c(-): F(-\ot c)  \rightarrow F(c\ot -).
\end{equation}

\item  A  2-trace $F$ is called a symmetric 2-trace (compare  with  the shadow structure in \cite{PS})  if
\begin{equation}
  \iota_{c}(1)=F(\Fa_{0,1}^{1,0}).
\end{equation}
\end{itemize}
\end{definition}

Note that the symmetry condition is indeed worthy of its name as it ensures that $$\iota_c(c')\iota_{c'}(c)=Id_{F(c\ot c')}.$$

\begin{example}\rm{
Let $A$ be an associative algebra and let $\Cc=\text{Bimod}(A)$ denote the tensor category of $A$-bimodules.  Then an example of a symmetric $2$-trace is provided by  the functor $HH_0(A,-),$ the $0$th Hochschild homology of an $A$-bimodule \cite{FSS}.  We note that for our purposes as outlined below, this example is not very interesting, its only advantage is that it is easy to explain.
}

\end{example}
Recall that we denote the subcategory of unital associative algebras in $\Cc$ by $Alg(\Cc)$. We denote the multiplication morphism of  an algebra object  $A\in \Cc$ by $m: A\ot A\rightarrow A$ and its  unit morphism by $u: 1\rightarrow A$.
Given an algebra  $A\in Alg(\Cc)$ and a symmetric  2-trace $F: \Cc \to \vect$, we define a cyclic
object in $\vect$ as follows.

\begin{definition}
Let $$C_n(A)=F(A^{\ot n+1}), n \geq 0.$$ We define the cyclic structure on $C_n (A)$ by
\begin{itemize}
\item    $\tau_n = F(\Fa_{1,n}^{n,1})\circ\iota_A(A^{\ot n})$,
\item     $\delta_i = F(\Fa^{n-1,1}_{i,1,n-i-1})\circ F(Id^{\ot i}\ot m\ot \Id^{\ot n-i-1})\circ F(\Fa_{n,1}^{i,2,n-i-1})$, for $0 \leq i \leq n-1$,
\item     $\delta_{n} =  \delta_0\circ\tau_n$,
\item     $\sigma_i = F(\Fa^{n+1,1}_{i+1,1,n-i})\circ F(\Id^{\ot i+1}\ot u\ot \Id^{\ot n-i})\circ F(\Fa_{n,1}^{i+1,0,n-i})$, for $0 \leq i \leq n$.
\end{itemize}
\end{definition}

Note that for $0 \leq i \leq n-1$ we have $$\delta_i=F(\delta_i^{(n)})$$ and for $0 \leq i \leq n-2$ we have $$\delta_i=F(\delta_i^{(n-1)}\ot Id).$$  Similarly, for $0 \leq i \leq n$ we have $$\sigma_i=F(\sigma_i^{(n)})=F(\Fa_{1,n+1}^{n+1,1})F(Id\ot \sigma_{i-1}^{(n-1)})F(\Fa_{n,1}^{1,n}),$$ and observe that $\sigma_{-1}$ makes sense and is useful.  These observations become relevant in the following Proposition.

\begin{proposition}\label{trace1}
  For any $A\in Alg(\Cc)$  and  any symmetric 2-trace $F: \Cc \to \vect$ we have a cyclic object $C_{\bullet}(A)=F(A^{\ot \bullet +1})$ in $\vect$.
\end{proposition}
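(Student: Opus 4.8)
The plan is to verify directly that the data $(C_n(A), \delta_i, \sigma_i, \tau_n)$ specified in the preceding definition satisfies the cyclic module relations \eqref{rel11} and \eqref{rel22}. The strategy is to reduce every relation, via the observations recorded just before the Proposition, to an identity that already holds at the level of morphisms in $\Cc$ before applying $F$. Concretely, for the purely simplicial relations \eqref{rel11} among the $\delta_i$ with $i<j\le n-1$ and among the $\sigma_i$, one writes $\delta_i = F(\delta_i^{(n)})$ and $\sigma_i = F(\sigma_i^{(n)})$, where $\delta_i^{(n)}$ and $\sigma_i^{(n)}$ are the evident face/degeneracy morphisms $A^{\ot n+1}\to A^{\ot n}$ (resp. $A^{\ot n+1}\to A^{\ot n+2}$) built from $m$, $u$, identities, and associativity isomorphisms $\Fa$. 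That these morphisms satisfy the simplicial identities in $\Cc$ is exactly the statement that $A$ is an associative, unital algebra object — the associativity and unit axioms, together with coherence (which collapses all the bookkeeping $\Fa_{\vec n}^{\vec m}$'s consistently) — so applying the functor $F$ transports the identities to $\vect$. The only face or degeneracy that is not literally of the form $F(\text{something in }\Cc)$ is $\delta_n = \delta_0\circ\tau_n$ and the top/bottom cases $\sigma_{-1}$-type rewrites; these are handled by the cyclic relations below rather than the simplicial ones.

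Next I would treat the relations in \eqref{rel22} involving $\tau_n$. The morphism $\tau_n = F(\Fa_{1,n}^{n,1})\circ \iota_A(A^{\ot n})$ is the one place where the centrality of $F$, i.e. the natural isomorphism $\iota_c : F(-\ot c)\to F(c\ot -)$, genuinely enters. The key input is the hexagon/naturality axioms satisfied by $\iota$ as a morphism in $\Zc_\Cc Fun(\Cc,\vect)$: $\iota_{c\ot c'}$ factors through $\iota_c$ and $\iota_{c'}$ up to the coherence isomorphisms $\Fa$, and $\iota_c$ is natural in the ``$-$'' slot. Using this, the identities $\delta_i\tau_n = \tau_{n-1}\delta_{i-1}$ for $1\le i\le n$ and $\sigma_i\tau_n = \tau_{n+1}\sigma_{i-1}$ for $1\le i\le n$ become, after inserting the explicit formulas and sliding $\iota_A$ past the multiplication/unit morphisms (which act on the ``other'' tensor factors and hence commute with $\iota_A$ by naturality), a matter of checking that the two resulting strings of $\Fa$'s agree — again automatic by Mac Lane coherence. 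The relation $\delta_0\tau_n = \delta_n$ is true by the very definition of $\delta_n$. For $\tau_n^{n+1} = \Id$ I would iterate: $\tau_n^{k}$ unwinds to $F(\Fa_{\ldots})\circ \iota_{A^{\ot k}}(A^{\ot n+1-k})$ up to coherence, using the hexagon to assemble $\iota_A$ composed $k$ times into $\iota_{A^{\ot k}}$; taking $k=n+1$ gives $\iota_{A^{\ot n+1}}(\id)$ precomposed and postcomposed with coherence isomorphisms, and here the \emph{symmetry} hypothesis $\iota_c(1) = F(\Fa_{0,1}^{1,0})$ is exactly what forces this to reduce to the identity (this is the content of the remark that symmetry ensures $\iota_c(c')\iota_{c'}(c) = \Id$). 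Finally, the ``extra'' relation $\sigma_0\tau_n = \tau_{n+1}^2\sigma_n$ need not be checked, by the discussion following \eqref{rel22}.

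The main obstacle I anticipate is purely organizational rather than conceptual: keeping track of the associativity isomorphisms $\Fa_{\vec n}^{\vec m}$ through all the compositions, especially in the $\tau$-relations where $\iota_A$ must be commuted past $m$, $u$, and various $\Fa$'s, and in the induction for $\tau_n^{n+1}$ where the hexagon for $\iota$ must be applied repeatedly to merge $\iota_A$'s into $\iota_{A^{\ot k}}$. In a strict monoidal category all the $\Fa$'s are identities and each relation collapses to a short diagram chase; the real work is checking that, with the $\Fa$'s reinstated, the two sides of each relation carry the \emph{same} composite of coherence isomorphisms, which is guaranteed abstractly by Mac Lane's coherence theorem (any two parallel morphisms built from associators and unitors agree) but which the authors have chosen — per the remark at the start of Section \ref{moncat2tr} — to display explicitly. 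I would therefore organize the proof as: (1) the simplicial identities from associativity/unitality of $A$ plus coherence; (2) the mixed $\delta$–$\tau$ and $\sigma$–$\tau$ identities from naturality of $\iota$ plus coherence; (3) $\tau_n^{n+1}=\Id$ from the hexagon for $\iota$ plus the symmetry axiom; and note that (3) is the step that actually uses the word ``symmetric'' in the hypothesis, so it is the one to write out with care.
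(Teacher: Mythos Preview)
Your proposal is correct and follows essentially the same route as the paper: simplicial identities come from the algebra axioms in $\Cc$ transported by $F$, the mixed $\delta$--$\tau$ and $\sigma$--$\tau$ identities come from naturality of $\iota$ plus coherence, and $\tau_n^{n+1}=\Id$ comes from the hexagon collapsing $\tau_n^{n+1}$ to $\iota_{A^{\ot n+1}}(1)$ together with the symmetry axiom. One small refinement: your phrase ``act on the other tensor factors'' covers $\delta_i\tau_n=\tau_{n-1}\delta_{i-1}$ only for $1\le i\le n-1$; for $i=n$ the multiplication $\delta_{n-1}$ touches the very factor $\iota_A$ moves, so the paper rewrites $\delta_n\tau_n=\delta_0\tau_n^2$, uses the hexagon to fuse $\tau_n^2$ into $\iota_{A^{\ot 2}}$, and then applies naturality of $\iota_c$ in the $c$-slot along $m:A^{\ot 2}\to A$ --- ingredients you already list, so this is organization rather than a gap.
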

\begin{proof}
To see the  simplicial relations we apply the functor $F$ to the simplicial relations that are classically satisfied by $\delta_i^{(n)}$'s and $\sigma_i^{(n)}$'s, with the exception of those involving the special $\delta_n$.  One can check that the latter all follow formally from the former simplicity relations and the cyclicity relations below.

Here we check the cyclicity relations. First for any $1\leq i \leq n$, we show that $\delta_{i}\tau_n= \tau_{n-1} \delta_{i-1}$. We begin with the case $1\leq i \leq n-1$:
\begin{align*}
  \tau_{n-1} \delta_{i-1} &= F(\Fa_{1,n-1}^{n-1,1})\iota_A(A^{\ot n-1})F(\delta_{i-1}^{(n-1)}\ot Id)\\
   &= F(\Fa_{1,n-1}^{n-1,1}) F(Id\ot \delta_{i-1}^{(n-1)})\iota_A(A^{\ot n})\\
   &= F(\delta_i^{(n)})F(\Fa_{1,n}^{n,1})\iota_A(A^{\ot n})\\
   &= \delta_i \tau_n.
\end{align*}
For $i=n$ we observe that $\delta_n\tau_n=\tau_{n-1}\delta_{n-1}$ iff $\delta_0\tau^2_n=\tau_{n-1}\delta_{n-1}$, since $\delta_n=\delta_0\tau_n$ by definition, and so:
\begin{align*}
  \tau_{n-1} \delta_{n-1} &= F(\Fa_{1,n-1}^{n-1,1})\iota_A(A^{\ot n-1})F(Id^{\ot n-1}\ot m)F(\Fa_{n,1}^{n-1,2})\\
   &= F(\Fa_{1,n-1}^{n-1,1}) F(m\ot Id^{\ot n-1})\iota_{A^{\ot 2}}(A^{\ot n-1})F(\Fa_{n,1}^{n-1,2})\\
   &= F(\Fa_{1,n-1}^{n-1,1}) F(m\ot Id^{\ot n-1})F(\Fa_{n,1}^{2,n-1})F(\Fa^{n,1}_{2,n-1})\iota_{A^{\ot 2}}(A^{\ot n-1})F(\Fa_{n,1}^{n-1,2})\\
   &= \delta_0 \tau_n^2.
\end{align*}

Here we show that $\sigma_i \tau_n=\tau_{n+1} \sigma_{i-1}$ for all $0\leq i\leq n$:
 \begin{align*}
   \sigma_i \tau_n &= F(\Fa_{1,n+1}^{n+1,1})F(Id\ot \sigma_{i-1}^{(n-1)})F(\Fa_{n,1}^{1,n})F(\Fa^{n,1}_{1,n})\iota_A(A^{\ot n})\\
   &= F(\Fa_{1,n+1}^{n+1,1})F(Id\ot \sigma_{i-1}^{(n-1)})\iota_A(A^{\ot n})\\
   &= F(\Fa_{1,n+1}^{n+1,1})\iota_A(A^{\ot n+1})F(\sigma_{i-1}^{(n-1)}\ot Id)\\
   &= \tau_{n+1} \sigma_{i-1}.
 \end{align*}

Finally we demonstrate that $\tau_n^{n+1}=Id$:
 \begin{align*}
   \tau^{n+1}_n &= F(\Fa_{n+1,0}^{n,1})\iota_{A^{\ot n+1}}(1)F(\Fa_{n,1}^{0,n+1})\\
   &= F(\Fa_{n+1,0}^{n,1})F(\Fa_{0,n+1}^{n+1,0})F(\Fa_{n,1}^{0,n+1})\\
   &= F(\Fa_{n,1}^{n,1})\\
   &= Id.
 \end{align*}

\end{proof}

Note that there was nothing special about $\vect$ in the above considerations, namely the results would still hold if $\vect$ was replaced by any target category $\Tc$, namely a symmetric $\Tc$-valued $2$-trace would still produce cyclic objects in $\Tc$ from elements of $Alg(\Cc)$.

Let $\Cc_{op}$ denote the opposite monoidal category \emph{with only the arrows reversed}. Thus the associator is replaced by its inverse. Let $\mc$ be a $\Cc$ bimodule category, then $\mc_{op}$ is a $\Cc_{op}$ bimodule category via $$c'\cdot m'\cdot d'=(c\cdot m\cdot d)',$$ where we use $m'$ to denote the element $m\in\mc$ when we consider it as an element of $\mc_{op}$. Recall that for $m\in\Zc_\Cc(\mc)$ we have isomorphisms $\iota^m_c:c\cdot m\xrightarrow\sim m\cdot c$.  We note that \begin{align*}\Zc_\Cc\mc&\simeq\Zc_{\Cc_{op}}\mc_{op}\\m&\mapsto m'\\\iota^m_c&\mapsto \iota'^{m'}_{c'}=((\iota^m_c)^{-1})'.\end{align*}

Consider $\Tc=\vect_{op}$ and replace $\Cc$ by $\Cc_{op}$.  More precisely, let $Coalg(\Cc)$ denote the subcategory of coassociative counital coalgebra objects of $(\Cc, \ot)$. Then \begin{align*}Fun(\Cc,\vect)&=Fun(\Cc_{op},\vect_{op})_{op},\\
\mathcal{Z}_{\Cc}Fun(\Cc,\vect)&\simeq\mathcal{Z}_{\Cc_{op}}Fun(\Cc,\vect)_{op}\\&=\mathcal{Z}_{\Cc_{op}}Fun(\Cc_{op},\vect_{op}),\\
Coalg(\Cc)&=Alg(\Cc_{op})_{op}.
\end{align*}  Furthermore, a cyclic object in $\vect_{op}$ is the same as a cocyclic object in $\vect$ and we have arrived at the following:

\begin{proposition}\label{trace2}
  If $C\in Coalg(\Cc)$ and $F$ a symmetric 2-trace then $C^\bullet= F(C^{\ot \bullet+1})$ is a cocyclic object in $\vect$.
\end{proposition}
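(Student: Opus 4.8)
The plan is to deduce Proposition \ref{trace2} directly from Proposition \ref{trace1} by the duality dictionary already assembled in the preceding paragraph, rather than by redoing the cyclic-object verification. First I would observe that the four displayed identifications — $Fun(\Cc,\vect)=Fun(\Cc_{op},\vect_{op})_{op}$, the equivalence $\mathcal{Z}_{\Cc}Fun(\Cc,\vect)\simeq\mathcal{Z}_{\Cc_{op}}Fun(\Cc_{op},\vect_{op})$, the equality $Coalg(\Cc)=Alg(\Cc_{op})_{op}$, and the fact that a cyclic object in $\vect_{op}$ is exactly a cocyclic object in $\vect$ — together reduce the statement to Proposition \ref{trace1} applied with $\Cc$ replaced by $\Cc_{op}$ and $\vect$ replaced by the target category $\Tc=\vect_{op}$. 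The remark immediately following the proof of Proposition \ref{trace1} is precisely what licenses the change of target: a symmetric $\Tc$-valued $2$-trace on any monoidal category produces cyclic objects in $\Tc$ from algebra objects.

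Concretely, the steps would be: (1) Given $F\in\mathcal{Z}_{\Cc}Fun(\Cc,\vect)$ a symmetric $2$-trace and $C\in Coalg(\Cc)$, transport $F$ across the equivalence $\mathcal{Z}_{\Cc}Fun(\Cc,\vect)\simeq\mathcal{Z}_{\Cc_{op}}Fun(\Cc_{op},\vect_{op})$ to obtain $F_{op}\in\mathcal{Z}_{\Cc_{op}}Fun(\Cc_{op},\vect_{op})$, and transport $C$ to $C_{op}:=C'\in Alg(\Cc_{op})$ via $Coalg(\Cc)=Alg(\Cc_{op})_{op}$. (2) Check that $F_{op}$ is again \emph{symmetric} as a $2$-trace on $\Cc_{op}$: this is where one must be a little careful, since the associator of $\Cc_{op}$ is the inverse of that of $\Cc$, so $\Fa^{1,0}_{0,1}$ in $\Cc_{op}$ is $(\Fa^{0,1}_{1,0})'=((\Fa^{1,0}_{0,1})^{-1})'$, and the transported central structure on $F_{op}$ sends $\iota^{F_{op}}_{c'}=((\iota^F_c)^{-1})'$; one then matches the symmetry condition $\iota^{F_{op}}_{c'}(\id)=F_{op}(\Fa^{1,0}_{0,1})$ in $\Cc_{op}$ against the known one for $F$ in $\Cc$ by unwinding the primes. (3) Apply Proposition \ref{trace1} (in the $\Tc$-valued form noted in the remark) to $F_{op}$ and $C_{op}$ to get a cyclic object $C_\bullet(C_{op})=F_{op}(C_{op}^{\ot\bullet+1})$ in $\vect_{op}$. (4) Unwind: a cyclic object in $\vect_{op}$ is a cocyclic object in $\vect$, and $F_{op}(C_{op}^{\ot\bullet+1})$ is, after removing the $(-)'$ bookkeeping, just $F(C^{\ot\bullet+1})$ with the arrows of its structure maps reversed — so $C^\bullet=F(C^{\ot\bullet+1})$ is a cocyclic object in $\vect$, and one may further record the explicit cofaces/codegeneracies/cocyclic maps as the $\vect$-duals of the formulas in Proposition \ref{trace1}'s definition if desired.

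The main obstacle — really the only non-bookkeeping point — is Step (2): verifying that symmetry of the $2$-trace is preserved under the passage to $\Cc_{op}$, given that both the associator and the central isomorphisms get inverted. One has to make sure the two inversions conspire correctly rather than clash; concretely, one checks that the relation $\iota_c(c')\iota_{c'}(c)=Id$ (the consequence of symmetry noted after the definition) is self-dual under $(-)'$, so that its validity for $F$ forces it for $F_{op}$, and conversely that this self-dual relation together with the $\Cc_{op}$ version of $\iota_c(\id)=F(\Fa^{1,0}_{0,1})$ are equivalent. I would also double-check the one genuinely asymmetric-looking ingredient, the definition $\delta_n=\delta_0\circ\tau_n$ in Proposition \ref{trace1}: under dualization this should become the usual $\sigma$-type or the $\delta_0\tau=\delta_n$ cocyclic relation, and since that relation was already shown in the proof of Proposition \ref{trace1} to follow formally from the simplicial and cyclic relations, no new work is needed. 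Everything else is the mechanical propagation of primes through the four identifications, which I would state rather than belabor.
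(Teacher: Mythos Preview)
Your proposal is correct and follows exactly the paper's approach: the paper does not give a separate proof of Proposition \ref{trace2} but instead derives it from Proposition \ref{trace1} via precisely the four identifications you list, together with the remark that the target $\vect$ may be replaced by any $\Tc$. Your Step (2) on the preservation of symmetry under passage to $\Cc_{op}$ is a point the paper leaves implicit, and your check that the inversion of the associator and of the central isomorphism $\iota$ cancel is the right way to make it explicit.
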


\begin{remark}
Recall that for an algebra $A$, we had $$\tau_n = F(\Fa_{1,n}^{n,1})\circ\iota_A(A^{\ot n}).$$ However, after unraveling the above identifications, we have for a coalgebra $C$: $$\tau_n = \iota^{-1}_C(C^{\ot n})\circ F(\Fa_{n,1}^{1,n}).$$

\end{remark}

\subsubsection{The contravariant functor $F$.}
While the covariant theory discussed above is suitable for explaining the cocyclic structure for the case  $C^n=\Hom_H(k, M\ot C^{\ot n+1})$.  If we want to deal with the case of $C^n=\Hom_H(M\ot A^{\ot n+1}, k)$ and obtain a cocyclic structure on it, then we need a contravariant $F$.  This is not hard to do in light of the above.

\begin{definition}
We say that a contravariant functor $F$ from $\Cc$ to $\vect$ is a symmetric $2$-contratrace if $F$ is a symmetric $2$-trace on $\Cc_{op}$.
\end{definition}
By recalling that $Coalg(\Cc)=Alg(\Cc_{op})_{op}$ we immediately obtain the following:
\begin{proposition}\label{trace3}
  If $C\in Coalg(\Cc)$ and $F$ a symmetric 2-contratrace then $C^\bullet= F(C^{\ot \bullet+1})$ is a cyclic object in $\vect$.
\end{proposition}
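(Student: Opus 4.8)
The plan is to derive Proposition~\ref{trace3} by the same ``opposite category'' dualization trick that produced Propositions~\ref{trace2} and~\ref{trace3} from Proposition~\ref{trace1}, but applied in a different order: instead of first reversing arrows in the \emph{target} $\vect$ and then taking opposite coalgebras, we first reverse arrows in the \emph{source} $\Cc$ only. Concretely, suppose $F$ is a symmetric $2$-contratrace on $\Cc$; by Definition this means $F$ is a symmetric $2$-trace on $\Cc_{op}$, i.e. $F\in\mathcal{Z}_{\Cc_{op}}Fun(\Cc_{op},\vect)$ in the appropriate sense, with $F$ covariant as a functor on $\Cc_{op}$.

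First I would recall the identity $Coalg(\Cc)=Alg(\Cc_{op})_{op}$, already established in the excerpt, and unpack it: an object $C\in Coalg(\Cc)$, regarded via the obvious identification as an object $A:=C'$ of $\Cc_{op}$, carries an algebra structure $A\in Alg(\Cc_{op})$ (its comultiplication and counit in $\Cc$ become a multiplication and unit in $\Cc_{op}$). Then I would apply Proposition~\ref{trace1} verbatim to the monoidal category $\Cc_{op}$, the algebra object $A\in Alg(\Cc_{op})$, and the symmetric $2$-trace $F:\Cc_{op}\to\vect$. Since Proposition~\ref{trace1} holds with $\vect$ replaced by any target category (as the remark after its proof observes — the target plays no role), and here the target genuinely is $\vect$ (not $\vect_{op}$), the conclusion is directly that
\begin{equation*}
C^\bullet=F\bigl((A)^{\ot_{\Cc_{op}}\bullet+1}\bigr)
\end{equation*}
is a cyclic object in $\vect$. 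The only remaining bookkeeping is to check that the $n$th term $F(A^{\ot_{\Cc_{op}}\,n+1})$ computed in $\Cc_{op}$ is canonically the same vector space as $F(C^{\ot n+1})$ computed in $\Cc$ — which holds because $\Cc$ and $\Cc_{op}$ have the same objects and the same tensor product on objects, only the arrows and associators being reversed — and that the structure maps $\delta_i,\sigma_i,\tau_n$ of Proposition~\ref{trace1}, written in $\Cc_{op}$ and then transported back along the arrow-reversal, are honest maps of vector spaces (they are, since $F$ lands in $\vect$, where arrow reversal is only a formal device and every morphism is a genuine linear map). This yields a cyclic object in $\vect$ as claimed.

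The main obstacle — really the only point requiring care — is the handedness of the various ``op'' operations and making sure they are being composed so as to land us in $\vect$ rather than in $\vect_{op}$. In the derivation of Proposition~\ref{trace2} one took $\Tc=\vect_{op}$ and replaced $\Cc$ by $\Cc_{op}$ simultaneously, so that a cyclic object in $\vect_{op}$ (= cocyclic in $\vect$) came out; here, because a $2$-contratrace is \emph{defined} as a $2$-trace on $\Cc_{op}$ valued in $\vect$ itself, we must only replace $\Cc$ by $\Cc_{op}$ and leave the target alone, so the output is a genuine cyclic object in $\vect$. I would make this explicit by noting the parallel identifications $Fun(\Cc_{op},\vect)=Fun(\Cc_{op},\vect)$ (trivially), $\mathcal{Z}_{\Cc_{op}}Fun(\Cc_{op},\vect)=\mathcal{Z}_{(\Cc_{op})}Fun(\Cc_{op},\vect)$, and $Alg(\Cc_{op})=Alg(\Cc_{op})$, so that Proposition~\ref{trace1} applies with no further transport of structure on the target side — the whole content is simply Proposition~\ref{trace1} read in the monoidal category $\Cc_{op}$, composed with the dictionary $Coalg(\Cc)=Alg(\Cc_{op})_{op}$. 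A remark analogous to the one after Proposition~\ref{trace2} could then record the resulting explicit formula for $\tau_n$ in this contravariant setting, namely $\tau_n=\iota_C^{-1}(C^{\ot n})\circ F(\Fa_{1,n}^{n,1})$ with $F$ now contravariant, although that is optional.
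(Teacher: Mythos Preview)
Your proposal is correct and follows exactly the paper's approach: the paper's entire proof is the one-line observation that $Coalg(\Cc)=Alg(\Cc_{op})_{op}$, so that a symmetric $2$-contratrace (by definition a symmetric $2$-trace on $\Cc_{op}$) together with $C\in Coalg(\Cc)$ is precisely the input to Proposition~\ref{trace1} applied in $\Cc_{op}$ with target $\vect$. Your optional closing formula for $\tau_n$ does not match the paper's remark, which records $\tau_n = F(\Fa^{1,n}_{n,1})\circ\iota_C(C^{\ot n})$ in this case; this is peripheral to the argument itself.
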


While $Alg(\Cc)=Coalg(\Cc_{op})_{op}$ implies that:

\begin{proposition}\label{trace4}
  If $A\in Alg(\Cc)$ and $F$ a symmetric 2-contratrace then $C^\bullet= F(A^{\ot \bullet+1})$ is a cocyclic object in $\vect$.
\end{proposition}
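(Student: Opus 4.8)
The plan is to derive Proposition \ref{trace4} from the already-established results by a sequence of dualizations, exactly in the spirit of how Propositions \ref{trace2} and \ref{trace3} were obtained. The key observation is that each of the three previous propositions encodes one instance of the dictionary: passing to $\Cc_{op}$ exchanges $Alg$ and $Coalg$ (up to the outer $(-)_{op}$), passing to $\vect_{op}$ exchanges cyclic and cocyclic objects, and a symmetric $2$-contratrace on $\Cc$ is \emph{by definition} a symmetric $2$-trace on $\Cc_{op}$. So the strategy is simply to compose these identifications in the right order.

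First I would start from Proposition \ref{trace1} applied to the monoidal category $\Cc_{op}$: for any $A'\in Alg(\Cc_{op})$ and any symmetric $2$-trace $G\colon\Cc_{op}\to\vect$, the assignment $A'\mapsto G((A')^{\ot\bullet+1})$ is a cyclic object in $\vect$. Now take $G=F$, which is a symmetric $2$-trace on $\Cc_{op}$ precisely because $F$ is a symmetric $2$-contratrace on $\Cc$; and note that $A\in Alg(\Cc)$ is the same datum as an object $A'\in Alg(\Cc_{op})_{op}$, i.e.\ $Alg(\Cc)=Coalg(\Cc_{op})_{op}$ as stated in the excerpt, so the object $A$ gives rise to an algebra object in $\Cc_{op}$ after reversing arrows. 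Feeding this into Proposition \ref{trace1} over $\Cc_{op}$ produces a cyclic object in $\vect_{op}$ (the extra $(-)_{op}$ on the coalgebra side and the contravariance of $F$ conspire to land the values in $\vect_{op}$ rather than $\vect$). Finally, invoking the identification ``a cyclic object in $\vect_{op}$ is the same as a cocyclic object in $\vect$'' that was already used in the passage preceding Proposition \ref{trace2}, we conclude that $C^\bullet=F(A^{\ot\bullet+1})$ is a cocyclic object in $\vect$, which is the claim.

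Concretely, I would phrase the proof as: \emph{By definition $F$ is a symmetric $2$-trace on $\Cc_{op}$, and $Alg(\Cc)=Coalg(\Cc_{op})_{op}$, so an $A\in Alg(\Cc)$ determines a coalgebra object of $\Cc_{op}$ in the sense needed for Proposition \ref{trace2} applied to $\Cc_{op}$; that proposition then yields a cocyclic object in $\vect_{op}$, which is the same as a cyclic object in $\vect$.} Alternatively one can route it through Proposition \ref{trace3} applied to $\Cc_{op}$ together with $Alg(\Cc)=Coalg(\Cc_{op})_{op}$ and the $\vect\leftrightarrow\vect_{op}$ swap — the bookkeeping is slightly different but the content is identical. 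Either way the proof is one or two sentences of cross-referencing, with no computation.

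The only point that requires genuine care — and this is where I expect the one subtlety to lie — is making sure the two ``op''s are tracked consistently: there is an $op$ on the monoidal category (reversing arrows, inverting the associator), an $op$ on the target $\vect$, and the built-in contravariance of $F$. Getting these to cancel or combine correctly is exactly what distinguishes a cyclic from a cocyclic conclusion, and it is easy to be off by one reversal. The safeguard is to keep the chain of identities $Fun(\Cc,\vect)=Fun(\Cc_{op},\vect_{op})_{op}$, $\Zc_\Cc\mc\simeq\Zc_{\Cc_{op}}\mc_{op}$, and $Alg(\Cc)=Coalg(\Cc_{op})_{op}$ explicitly in front of me and to note that Proposition \ref{trace3} (already proved by the same mechanism, one op-level down) is the direct analogue, so consistency with it is the sanity check. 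No new ideas beyond the already-deployed duality dictionary are needed.
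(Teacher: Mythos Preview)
Your overall strategy is exactly the paper's: use $Alg(\Cc)=Coalg(\Cc_{op})_{op}$ together with the fact that a symmetric $2$-contratrace on $\Cc$ is by definition a symmetric $2$-trace on $\Cc_{op}$, and then invoke one of the earlier propositions applied to $\Cc_{op}$. The paper's one-line argument is precisely: Proposition~\ref{trace2} for the monoidal category $\Cc_{op}$ yields the claim.

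However, your op-bookkeeping slips, and you yourself flagged this as the danger. In your first paragraph you try to feed $A$ into Proposition~\ref{trace1} over $\Cc_{op}$, but an element of $Alg(\Cc)=Coalg(\Cc_{op})_{op}$ is a \emph{coalgebra} in $\Cc_{op}$, not an algebra; so Proposition~\ref{trace1} is not applicable there, and the claimed landing in $\vect_{op}$ is hand-waved rather than justified. In your ``concrete'' second version you correctly switch to Proposition~\ref{trace2} over $\Cc_{op}$ with $A$ as a coalgebra object --- this is the right move --- but then you insert a spurious $\vect\leftrightarrow\vect_{op}$ swap. Proposition~\ref{trace2}, as stated, already outputs a cocyclic object in $\vect$; no further reversal is needed. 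Your extra reversal leads you to conclude ``cyclic object in $\vect$'', which is the \emph{wrong} statement (it is the content of Proposition~\ref{trace3}, not Proposition~\ref{trace4}).

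The fix is simply to delete that last step: $F$ is a symmetric $2$-trace on $\Cc_{op}$, $A$ is a coalgebra object in $\Cc_{op}$, hence Proposition~\ref{trace2} applied to $\Cc_{op}$ gives that $F(A^{\ot\bullet+1})$ is cocyclic in $\vect$. Done.
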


\begin{remark}
Now for an algebra $A$, we have $$\tau_n = \iota^{-1}_A(A^{\ot n})\circ F(\Fa^{n,1}_{1,n}).$$ Furthermore, for a coalgebra $C$ we get: $$\tau_n = F(\Fa^{1,n}_{n,1})\circ\iota_C(C^{\ot n}).$$

\end{remark}

\subsection{Stable central pairs}
The concept of a stable central pair introduced in the following definition arises naturally in settings generalizing the Hopf-cyclic theory.  The Hopf-cyclic theory itself is implicitly based on it.  As the Lemma below demonstrates the reason for its usefulness is that it is a natural way of constructing symmetric $2$-traces, which lead, as we saw above, to cyclic objects.

\begin{definition}\label{stablepair}
  Let $(\mathcal{C}, \ot)$ be a monoidal category,  and  $\mathcal{M}$ a $\Cc$-bimodule category.
    Let $F\in Fun (\mathcal{M}, \vect)$ and $m\in \mathcal{M}$.    The pair $(F, m)$ is called a central pair if
  \begin{itemize}
    \item  $F\in \mathcal{Z}_{\Cc} Fun(\mathcal{M}, \vect)$, in particular $\iota^F_c(-): F(-\cdot c)\simeq F(c\cdot -)$.
    \item  $m\in \mathcal{Z}_{\Cc}(\mathcal{M})$, in particular $\iota^m_c: c\cdot m \simeq m\cdot c$.
    \  \end{itemize}
The central pair $(F, m)$ is called a stable central pair if
\begin{itemize}
%\item  $\iota^F_c(m)F(\iota^m_c)=Id_{F(c\cdot m)}$.
\item  $F(\iota^m_c)\iota^F_c(m)=Id_{F(m\cdot c)}$.
  \end{itemize}
\end{definition}

\begin{lemma}\label{pairtotrace}
If $(F, m)$ is a (stable) central pair then $F(m \cdot -)$ is a (symmetric) 2-trace.
\end{lemma}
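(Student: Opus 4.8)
The plan is to define a candidate 2-trace $G := F(m \cdot -) \in Fun(\Cc, \vect)$ and show it lies in $\Zc_\Cc Fun(\Cc, \vect)$, i.e.\ exhibit natural isomorphisms $\iota^G_c(-): G(- \ot c) \to G(c \ot -)$, which unwind to $F(m \cdot (- \ot c)) \to F(m \cdot (c \ot -))$. First I would assemble these from the two given central structures: $\iota^m_c: c \cdot m \to m \cdot c$ in $\mc$ and $\iota^F_c(-): F(- \cdot c) \to F(c \cdot -)$ in $Fun(\mc, \vect)$. The idea is to slide $c$ from the right of $m$ to the left using $(\iota^m_c)^{-1}$ at the object level (turning $m \cdot c$ into $c \cdot m$, up to the bimodule associativity constraints relating $m \cdot (c \ot x)$, $(m \cdot c)\cdot x$, $(c \cdot m)\cdot x$, and $c \cdot (m \cdot x)$), and then apply $\iota^F_c$ evaluated at $m \cdot x$ to move past $F$, landing in $F(c \cdot (m \cdot x)) \cong F(m \cdot (c \ot x))$. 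So I would set, schematically, $\iota^G_c(x) = (\text{assoc}) \circ \iota^F_c(m \cdot x) \circ (\text{assoc}) \circ F((\iota^m_c)^{-1} \cdot \Id_x) \circ (\text{assoc})$, being careful about which associativity/module-constraint isomorphisms are inserted. Each ingredient is an isomorphism, hence so is the composite.

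Next I would verify the two coherence axioms that make $G$ an object of the center of the bimodule category $Fun(\Cc,\vect)$: naturality of $\iota^G_c$ in both $c$ and the argument, and the hexagon/compatibility condition $\iota^G_{c \ot c'} = (\text{associators}) \circ \iota^G_c \circ \iota^G_{c'}$ expressing compatibility with the monoidal structure. Naturality is inherited componentwise from naturality of $\iota^m$ and $\iota^F$ together with functoriality of $F$. The hexagon for $\iota^G$ should follow by pasting the hexagon for $\iota^m$ (compatibility of the $\mc$-central structure with $\ot$) with the hexagon for $\iota^F$ and the pentagon-type coherence for the bimodule structure on $\mc$; this is the standard ``two commuting central structures compose'' argument, and all the pasted diagrams commute by the coherence theorem for (bi)module categories, so no genuinely new identity is needed.

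It remains to treat the symmetric case: assuming $(F,m)$ is a stable central pair, I must show $\iota^G_c(\id) = G(\Fa^{1,0}_{0,1})$, equivalently that the map $\iota^G_c$ evaluated at the unit object reduces to the image under $G = F(m \cdot -)$ of the canonical unit constraint. Unwinding the definition of $\iota^G_c(\id)$, and using the unit-coherence of the bimodule category (so that $m \cdot (\id \ot c)$, $m \cdot c$, $(c \cdot m)\cdot \id$ etc.\ are identified via canonical isomorphisms), the composite collapses to $F(\iota^m_c)$ followed by $\iota^F_c(m)$ — exactly the left-hand side of the stability axiom $F(\iota^m_c)\,\iota^F_c(m) = \Id_{F(m \cdot c)}$. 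Hence $\iota^G_c(\id)$ equals the identity up to the canonical unit isomorphism, which is precisely the content of $\iota^G_c(\id) = G(\Fa^{1,0}_{0,1})$, so $G$ is a symmetric 2-trace.

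The main obstacle I anticipate is purely bookkeeping: pinning down the exact string of associativity and unit constraints for the $\Cc$-bimodule category $\mc$ that must be inserted between $\iota^m$, $\iota^F$ and $F$ so that (a) the composite is well-typed, (b) naturality and the hexagon go through, and (c) the symmetric reduction lands exactly on the stability equation rather than on the stability equation conjugated by some stray coherence isomorphism. In a strict setting this is immediate; in the non-strict setting it is the usual diagram-chase against the coherence theorem, with no surprises beyond diligence. I would organize the proof so that all such constraints are named once (via the $\Fa$ notation and the bimodule associators) and then invoked uniformly.
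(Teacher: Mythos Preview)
Your overall plan is essentially the paper's: assemble $\iota^G_c$ from $\iota^F_c$ and $\iota^m_c$ interspersed with bimodule associators, then show that at the unit the composite reduces to the stability axiom. However, the schematic formula you wrote does not type-check, and the error is not just a missing associator. Starting from $F(m\cdot(x\ot c))$, the only available associator lands you in $F((m\cdot x)\cdot c)$; from there the map $F((\iota^m_c)^{-1}\cdot\Id_x)$ cannot be applied, since $c$ is not adjacent to $m$. The correct order is the reverse of what you describe: first apply $\iota^F_c(m\cdot x)$ to move $c$ across the whole module action, reaching $F(c\cdot(m\cdot x))$; associate to $F((c\cdot m)\cdot x)$; then apply $F(\iota^m_c\cdot\Id_x)$ (not its inverse) to obtain $F((m\cdot c)\cdot x)$; and finally associate to $F(m\cdot(c\ot x))$. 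This is exactly the five-step chain the paper writes out.

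With that correction your symmetry argument also straightens out: at $x=1$ the composite becomes $F(\iota^m_c)\circ\iota^F_c(m)$, which is literally the left-hand side of the stability axiom, not ``$F(\iota^m_c)$ followed by $\iota^F_c(m)$'' as you wrote (that is the other composite $\iota^F_c(m)\circ F(\iota^m_c)$; the two being the identity are equivalent, but you should match the axiom as stated). Your identification of the last step as $F(c\cdot(m\cdot x))\cong F(m\cdot(c\ot x))$ via associators alone is likewise wrong: that isomorphism \emph{is} the application of $\iota^m_c$, so you have simply hidden the second central structure inside an unlabeled ``assoc''. Once these directions are fixed, the rest of your outline (naturality, hexagon via pasting the two central-structure hexagons against bimodule coherence) goes through and coincides with the paper's argument.
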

\begin{proof}
Define the structure of a 2-trace on $F(m \cdot -)$, i.e., an isomorphism $$\iota_c: F(m\cdot (-\ot c))\simeq F(m\cdot(c\ot -))$$ via the chain of isomorphisms:
\begin{align*}
F(m\cdot (-\ot c))&\rightarrow F((m\cdot -)\cdot c)\\
&\rightarrow F(c\cdot(m\cdot -))\\
&\rightarrow F((c\cdot m)\cdot -)\\
&\rightarrow F((m\cdot c)\cdot -)\\
&\rightarrow F(m\cdot(c\ot -))
\end{align*}

For the symmetry condition consult the following commutative diagram with all arrows being the obvious isomorphisms:

$$
\xymatrix{
F(m\cdot c)\ar[r]\ar[dr]\ar[dd] & F(m\cdot (1\ot c))\ar[d]\\
& F((m\cdot 1)\cdot c)\ar[d]\\
F(c\cdot m)\ar[r]\ar[dr]\ar[dd] & F(c\cdot(m\cdot 1))\ar[d]\\
& F((c\cdot m)\cdot 1)\ar[d]\\
F(m\cdot c)\ar[r]\ar[dr] & F((m\cdot c)\cdot 1)\ar[d]\\
& F(m\cdot (c\otimes 1))\\
}
$$ then the first column composes to $Id$ by stability, and the second column composes to $\iota_c(1)$ by definition, the claim follows.
\end{proof}

This  shows that a stable central pair gives us a symmetric 2-trace and therefore by Propositions \ref{trace1} and \ref{trace2} produces cyclic and cocyclic objects from algebras and coalgebras.  More precisely, $F(m\cdot A^{\ot \bullet+1})$ and $F(m\cdot C^{\ot \bullet+1})$ are cyclic and cocyclic objects, for $A$ an algebra and $C$ a coalgebra respectively.  Let us write out the cyclic map for these cases.  Roughly speaking, i.e., ignoring the associativity isomorphisms we have $$\tau: F(m\cdot A^{\ot n+1})\xrightarrow{\iota^F_A(m\cdot A^{\ot n})} F(A\cdot m\cdot A^{\ot n})\xrightarrow{F(\iota^m_A\cdot Id^{\ot n})} F(m\cdot A^{\ot n+1})$$ while $$\tau: F(m\cdot C^{\ot n+1})\xrightarrow{F((\iota^m_C)^{-1}\cdot Id^{\ot n})} F(C\cdot m\cdot C^{\ot n})\xrightarrow{(\iota^F_C)^{-1}(m\cdot C^{\ot n})} F(m\cdot C^{\ot n+1}),$$ so that in the algebra case $\tau$ moves ``last to first" while in the coalgebra case it does the opposite.

\subsubsection{Stable central contrapairs.}
Let us now mirror the above discussion for the contravariant case.

\begin{definition}\label{stablecontrapair}
  Let $(\mathcal{C}, \ot)$ be a monoidal category,  and  $\mc$ a $\Cc$-bimodule category.
    Let $F\in Fun (\mc_{op}, \vect)$ and $m\in \mc$.    The pair $(F, m)$ is called a stable central contrapair if
  $(F,m')$ is a stable central pair for $\Cc_{op}$.
\end{definition}

We immediately obtain:

\begin{lemma}\label{contrapairtocontratrace}
If $(F, m)$ is a stable central contrapair then $F(m \cdot -)$ is a symmetric $2$-contratrace.
\end{lemma}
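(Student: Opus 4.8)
The plan is to invoke the general machinery that has already been set up for the covariant case, via the opposite-category formalism, and then dualize. Specifically, Lemma \ref{contrapairtocontratrace} asserts that if $(F,m)$ is a stable central contrapair then $F(m\cdot -)$ is a symmetric $2$-contratrace; the natural route is to unwind the definitions so that this becomes a verbatim restatement of Lemma \ref{pairtotrace} applied to $\Cc_{op}$.

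First I would recall the relevant definitions. By Definition \ref{stablecontrapair}, saying $(F,m)$ is a stable central contrapair for $\Cc$ with $\mc$ a $\Cc$-bimodule category means precisely that $(F,m')$ is a stable central pair for $\Cc_{op}$, where $m'$ denotes $m$ viewed as an object of the $\Cc_{op}$-bimodule category $\mc_{op}$ (using $c'\cdot m'\cdot d'=(c\cdot m\cdot d)'$) and $F\in Fun(\mc_{op},\vect)$. On the other hand, by the Definition preceding Proposition \ref{trace3}, a symmetric $2$-contratrace on $\Cc$ is by definition a symmetric $2$-trace on $\Cc_{op}$. So both sides of the claimed equivalence have been pre-arranged to live over $\Cc_{op}$.

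The proof is then immediate: apply Lemma \ref{pairtotrace} with the monoidal category taken to be $\Cc_{op}$ and the $\Cc_{op}$-bimodule category taken to be $\mc_{op}$. Since $(F,m')$ is a stable central pair for $\Cc_{op}$ by hypothesis, Lemma \ref{pairtotrace} gives that $F(m'\cdot -)$ is a symmetric $2$-trace on $\Cc_{op}$. Now $F(m'\cdot -)$, as a functor on $\mc_{op}$ evaluated at objects of the form $m'\cdot c'=(m\cdot c)'$, is literally the functor $c\mapsto F(m\cdot c)$, i.e.\ $F(m\cdot -)$ as a contravariant functor on $\Cc$ (recall that a functor on $\mc_{op}$ is a contravariant functor on $\mc$, and here we are restricting along the right action to get a functor on $\Cc$ respectively $\Cc_{op}$). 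Being a symmetric $2$-trace on $\Cc_{op}$ is, by definition, being a symmetric $2$-contratrace on $\Cc$. This is exactly the assertion.

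The only point requiring any care — and hence the main (very mild) obstacle — is the bookkeeping of the $(-)'$ identification: one must check that the $2$-trace structure isomorphisms $\iota_c$ produced by Lemma \ref{pairtotrace} for $F(m'\cdot-)$ over $\Cc_{op}$, namely the chain $F(m'\cdot(-\ot c'))\to F((m'\cdot-)\cdot c')\to\cdots\to F(m'\cdot(c'\ot-))$, correspond under $(-)'$ to the correct natural isomorphisms $F((-\ot c)\cdot m)\to F((c\ot-)\cdot m)$ on the contravariant side, where the tensor product $\ot$ in $\Cc_{op}$ is that of $\Cc$ but the associators are inverted and arrows reversed. This is precisely the content of the identifications $\Zc_\Cc\mc\simeq\Zc_{\Cc_{op}}\mc_{op}$ (with $\iota^m_c\mapsto((\iota^m_c)^{-1})'$) recalled in the discussion before Proposition \ref{trace2}, so no new computation is needed; one simply notes that the chain of isomorphisms defining $\iota_c$ in Lemma \ref{pairtotrace} is built entirely out of the associativity constraints of the bimodule category and the central isomorphisms $\iota^F$, $\iota^m$, all of which transport correctly under $(-)'$, and the stability condition $F(\iota^{m'}_{c'})\iota^F_{c'}(m')=Id$ is exactly the stable-central-pair condition for $\Cc_{op}$, which holds by hypothesis. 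Hence the symmetry condition for $F(m\cdot-)$ as a $2$-contratrace follows from the corresponding symmetry established in Lemma \ref{pairtotrace}.

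\begin{proof}
By Definition \ref{stablecontrapair}, the hypothesis that $(F,m)$ is a stable central contrapair for $\Cc$ means exactly that $(F,m')$ is a stable central pair for $\Cc_{op}$, with $\mc_{op}$ the associated $\Cc_{op}$-bimodule category. Applying Lemma \ref{pairtotrace} to the monoidal category $\Cc_{op}$ and the $\Cc_{op}$-bimodule category $\mc_{op}$, we conclude that $F(m'\cdot -)$ is a symmetric $2$-trace on $\Cc_{op}$. Under the identification $c'\cdot m'\cdot d'=(c\cdot m\cdot d)'$, the functor $F(m'\cdot -)$ is the contravariant functor $c\mapsto F(m\cdot c)$ on $\Cc$, i.e.\ $F(m\cdot -)$, and the $2$-trace structure isomorphisms transport under $(-)'$ via the correspondence $\Zc_{\Cc}\mc\simeq\Zc_{\Cc_{op}}\mc_{op}$ recorded above. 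By the Definition preceding Proposition \ref{trace3}, a symmetric $2$-trace on $\Cc_{op}$ is precisely a symmetric $2$-contratrace on $\Cc$. Therefore $F(m\cdot -)$ is a symmetric $2$-contratrace.
\end{proof}
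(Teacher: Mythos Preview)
Your proposal is correct and takes essentially the same approach as the paper, which simply states ``We immediately obtain:'' before the lemma without giving any explicit proof. You have faithfully unpacked what the paper leaves implicit: the definitions of stable central contrapair and symmetric $2$-contratrace are arranged precisely so that the lemma is Lemma \ref{pairtotrace} applied verbatim to $\Cc_{op}$ and $\mc_{op}$, and your formal proof carries this out cleanly.
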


Thus as above, $F(m\cdot A^{\ot \bullet+1})$ and $F(m\cdot C^{\ot \bullet+1})$ are cocyclic and cyclic objects, for $A$ an algebra and $C$ a coalgebra respectively.  Ignoring the associativity isomorphisms we have $$\tau: F(m\cdot A\ot A^{\ot n})\rightarrow F(A\cdot m\cdot A^{\ot n})\rightarrow F(m\cdot A^{\ot n}\ot A)$$ while $$\tau: F(m\cdot C^{\ot n}\ot C)\rightarrow F(C\cdot m\cdot C^{\ot n})\rightarrow F(m\cdot C\ot C^{\ot n}),$$ so that in the coalgebra case $\tau$ moves ``last to first" while in the algebra case it does the opposite.

\section{The monoidal category of left modules over a Hopf algebra}

In this section we  apply our results from Section \ref{moncat2tr} to the monoidal category of left modules over a Hopf algebra $H$. Our aim is  to construct a symmetric 2-trace via a stable central pair.
The idea was sketched in Section \ref{motivation} for module coalgebras, here we briefly recap for module algebras.  The only difference is that in the definition of $\tau$ the order of what gets used first: the centrality of the functor or the centrality of the element gets reversed.

Let us consider a simpler version of what we want, namely $\hmod_{fd}$ which is a rigid monoidal category. Using the  rigid structure we have  the following isomorphism

\begin{equation}\label{adj1}
  \Hom_H(1, -\ot V)\xrightarrow\sim \Hom_H(1, {^{**}V}\ot -),
\end{equation}

and furthermore

$$\Hom_H(1, -)\in \Zc_{\hmod_{fd}} Fun(\tthmod_{fd}, \vect).$$  If in addition $$M\in\Zc_{\hmod_{fd}}(\tthmod_{fd}),$$ in particular we have \begin{equation}\label{centre}{^{**}-}\ot M   \xrightarrow\sim M\ot -,\end{equation} then we can make a cyclic map $\tau$ as follows.

Consider an (algebra) object $A$ in $\hmod_{fd}$ then we obtain

$$\tau: \Hom_{H}(1, M\ot A^{\ot n+1})\xrightarrow\sim  \Hom_{H}(1, {^{**}A}\ot M\ot A^{\ot n}) \xrightarrow\sim \Hom_{H}(1, M\ot A\ot A^{\ot n}),$$ where we first used \eqref{adj1} and then \eqref{centre} thus \emph{sliding the last copy of $A$ to the front and past $M$}.
Of course we only need the algebra structure to define the simplicial structure, the map $\tau$ above does not need it. The resulting structure on $\Hom_{H}(1, M\ot A^{\ot n+1})$ is that of a cyclic module, provided that $$\tau_0:\Hom_{H}(1, M\ot A)\xrightarrow\sim \Hom_{H}(1, M\ot A)$$ is the identity map.  If the latter condition is dropped then the result is a paracyclic module.

We note that the conditions on $M$ as outlined above are equivalent to the $1$-stable $YD_1$ condition, see below for details.

\subsection{The covariant theory for $\hmod$}\label{2traceforhmod}

In this subsection we consider  the monoidal category $\hmod$ and show that if $M$ is a left-right $\mathcal{YD}_1$ module, then the  functor  $\Hom_H(1,-)$ paired with $M$ forms a central pair $\left(\Hom_H(1, -), M\right)$ for a suitable bimodule category, namely $\mc=\tinvmod$. Furthermore if $M$ is $1$-stable, then $\left(\Hom_H(1, -), M\right)$ is a stable central pair.

Recall that to prove  that $\left(\Hom_H(1, -), M\right)$ is a stable central pair for $\hmod$ and its bimodule category $\tinvmod$,   we need to show that

\begin{itemize}
  \item $\Hom_H(1, -)\in \Zc_\hmod Fun(\tinvmod,\vect)$.
  \item If $M\in\mathcal{YD}_1$ then $M\in \Zc_\hmod(\tinvmod)$.
  \item $1$-stability of $M$ ensures that $\tau_0=\Id$ and therefore the stability of the  central pair.
\end{itemize}

The second point is the content of Theorem \ref{ydcenter}.  Now to address the first point.

If  $A, B\in \hmod$, then the  left $H$-module map  $f\in \Hom_H(1,A\ot B)$ is equivalent to the data of an $H$-invariant element $a\ot b\in A\ot B$.  More precisely,
$$h\ps{1}a\ot h\ps{2} b=\varepsilon(h)a\ot b,$$ for all $h\in H$.  Note that we write $a\ot b$ when we actually mean a sum of such elements in $(A\ot B)^H$.

\begin{lemma}\label{cov-H-mod}
  Let $A, B\in \hmod$ and $a\ot b\in A\ot B$. Then $$h\ps{1}a\ot h\ps{2} b=\varepsilon(h)a\ot b \quad\Leftrightarrow\quad S(h)a\ot b= a\ot hb.$$
\end{lemma}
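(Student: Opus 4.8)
The statement is an equivalence of two conditions on an element $a\ot b\in A\ot B$, where $A,B$ are left $H$-modules; the left-hand side says $a\ot b$ is $H$-invariant, the right-hand side says it is fixed under a twisted-looking relation. The plan is to prove each implication by a direct computation using only the Hopf algebra axioms: coassociativity, counitality, and the antipode identities $h\ps{1}S(h\ps{2})=\varepsilon(h)1=S(h\ps{1})h\ps{2}$. The whole argument is the same kind of ``insert a counit, expand it via the antipode axiom, reassociate, contract'' manipulation already used in the proofs of the Lemma characterizing ${_H\mathcal{YD}}^H_i$ and of Lemma \ref{stability}, so I would model the bookkeeping on those.

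For ``$\Rightarrow$'': starting from $S(h)a\ot b$, I would write $S(h) = S(h\ps{2})\varepsilon(h\ps{1})$, then rewrite $\varepsilon(h\ps{1})$ using $\varepsilon(h\ps{1}) = S(h\ps{1})h\ps{2}$ applied appropriately — more precisely expand $h$ so that one factor can act on $a$ in the invariance relation. The clean way: $S(h)a\ot b = S(h\ps{2})h\ps{3}S(h\ps{1}) a \ot b$ is wrong-headed; instead use the hypothesis in the form $h\ps{1}a\ot h\ps{2}b = \varepsilon(h)a\ot b$ and substitute $h\mapsto$ a comultiple. Concretely: $a\ot hb = \varepsilon(h\ps{1})a\ot h\ps{2}b = S(h\ps{1})h\ps{2}a\ot h\ps{3}b$, and now the pair $(h\ps{2}a, h\ps{3}b)$ is acted on by $h\ps{2}\ot h\ps{3}$, i.e. apply invariance to $h\ps{2}$ (in its role split as $h\ps{2}\ps{1}\ot h\ps{2}\ps{2}$) to collapse $h\ps{2}a\ot h\ps{3}b$ to $\varepsilon(h\ps{2})a\ot b$, leaving $S(h\ps{1})\varepsilon(h\ps{2})a\ot b = S(h)a\ot b$. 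That gives $a\ot hb = S(h)a\ot b$, which is the desired right-hand side.

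For ``$\Leftarrow$'': starting from $h\ps{1}a\ot h\ps{2}b$, use the hypothesis $S(k)a\ot b = a\ot kb$ with $k = h\ps{1}$ to move the action off the first tensor factor: $h\ps{1}a\ot h\ps{2}b$; apply the hypothesis ``backwards'' to rewrite $h\ps{1}a = S(?)\cdots$. The cleanest route is: $h\ps{1}a\ot h\ps{2}b = S(S^{-1}(h\ps{1}))a\ot h\ps{2}b = a\ot S^{-1}(h\ps{1})h\ps{2}b$ — but this invokes $S^{-1}$, which the paper is happy to assume exists, yet it is nicer to avoid it. Alternatively: from $S(h\ps{1})a\ot b = a\ot h\ps{1}b$ we get, multiplying the first leg on the left by $h\ps{2}$ (which acts on $A$) and keeping track, $h\ps{2}S(h\ps{1})a\ot b = h\ps{2}a\ot h\ps{1}b$; wait — I need the indices aligned so the antipode axiom fires. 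Expand $h$ as $h\ps{1}\ot h\ps{2}\ot h\ps{3}$; the hypothesis applied to $h\ps{2}$ gives $S(h\ps{2})a\ot b = a\ot h\ps{2}b$; act by $h\ps{1}$ on the first factor of the left side and by $h\ps{1}$ on... this needs care. The honest statement: apply the hypothesis to rewrite $h\ps{1}a\ot h\ps{2}b$ as $h\ps{1}S(h\ps{2})(\text{something})$; I expect the resulting chain to be $h\ps{1}a\ot h\ps{2}b = h\ps{1}S(h\ps{2})a\ot h\ps{3}b$? No. Let me just record that the correct contraction is $h\ps{1}a\ot h\ps{2}b \overset{\text{hyp}}{=} a\ot S^{-1}(h\ps{1})h\ps{2}b = \varepsilon(h)a\ot b$ using $S^{-1}(h\ps{1})h\ps{2} = \varepsilon(h)1$ (the antipode axiom for $S^{-1}$, valid since $S$ is invertible). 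Both directions thus reduce to inserting/removing a counit via an antipode identity.

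The main obstacle is purely notational: getting the Sweedler indices to line up so that exactly one antipode identity and one counit contraction do the job in each direction, and choosing whether to phrase ``$\Leftarrow$'' with $S^{-1}$ (allowed, since invertibility of $S$ is a standing assumption) or to massage it so only $S$ appears. There is no conceptual difficulty — the authors themselves flag, in the proof of Lemma \ref{stability}, that a slicker explanation exists ``in terms of the $\tau_0$ map and its inverse,'' which is exactly the present Lemma in disguise: the right-hand relation $S(h)a\ot b = a\ot hb$ is precisely the assertion that the candidate central isomorphism $\iota^m$ (here with $m$ replaced by the unit situation and $V=H$ morally) is an $H$-module map, so at the conceptual level this Lemma just re-derives the $i=-1$, or rather the relevant, generalized-YD computation. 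I would present the two short Sweedler computations in display-math \texttt{align*} blocks mirroring the format of the $\eqref{genyd1}\Leftrightarrow\eqref{genyd2}$ proof.
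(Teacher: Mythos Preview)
Your ``$\Rightarrow$'' direction is exactly the paper's argument, just run from $a\ot hb$ toward $S(h)a\ot b$ rather than the other way; the chain $a\ot hb=\varepsilon(h\ps{1})a\ot h\ps{2}b=S(h\ps{1})h\ps{2}a\ot h\ps{3}b=S(h\ps{1})\varepsilon(h\ps{2})a\ot b=S(h)a\ot b$ is precisely what the paper writes.

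Your ``$\Leftarrow$'' direction, however, contains a genuine error. You record the contraction as
\[
h\ps{1}a\ot h\ps{2}b \overset{\text{hyp}}{=} a\ot S^{-1}(h\ps{1})h\ps{2}b = \varepsilon(h)a\ot b,
\]
invoking ``$S^{-1}(h\ps{1})h\ps{2}=\varepsilon(h)1$''. That identity is false in general (it fails already in $U_q(\mathfrak{sl}_2)$); the correct $S^{-1}$ identities are $h\ps{2}S^{-1}(h\ps{1})=\varepsilon(h)$ and $S^{-1}(h\ps{2})h\ps{1}=\varepsilon(h)$. The mistake traces back to the intermediate step: the hypothesis $S(k)a\ot b=a\ot kb$ applies only to the fixed element $a\ot b$, so to move $h\ps{1}$ across you must first factor $(h\ps{1}\ot h\ps{2})=(1\ot h\ps{2})(h\ps{1}\ot 1)$ and apply the hypothesis to $(h\ps{1}\ot 1)(a\ot b)$, obtaining $a\ot h\ps{2}S^{-1}(h\ps{1})b$, not $a\ot S^{-1}(h\ps{1})h\ps{2}b$. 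With the corrected order the argument does go through.

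The paper avoids the whole issue with a one-line argument that uses only $S$: start from the other side and expand $\varepsilon(h)a\ot b = h\ps{1}S(h\ps{2})a\ot b$, then apply the hypothesis with $k=h\ps{2}$ to get $h\ps{1}a\ot h\ps{2}b$. This is both shorter and sidesteps the ordering pitfall; I would recommend you adopt it.
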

\begin{proof}
If $S(h)a\ot b=a\ot hb$ then
$\varepsilon(h)a\ot b=h\ps{1}S(h\ps{2})a\ot b=h\ps{1}a\ot h\ps{2}b.
 $
  Conversely if $h\ps{1}a\ot h\ps{2}b=\varepsilon(h) a\ot b$ then
  \begin{align*}
    S(h)a\ot b&= S(h\ps{1}\varepsilon(h\ps{2}))a\ot b\\
    &= S(h\ps{1})\varepsilon(h\ps{2})a\ot b\\
    &= S(h\ps{1})h\ps{2}a\ot h\ps{3}b\\
    &= \varepsilon(h\ps{1})a\ot h\ps{2}b\\
    &= a\ot \varepsilon(h\ps{1})h\ps{2}b\\
    &= a\ot hb.
  \end{align*}
\end{proof}

For the sake of reducing notational clutter, let us, for an element $B\in\hmod$, denote by ${^\# B}$ what was until now called $B^{\#^{-1}}$.

\begin{proposition}\label{homcentral}
$$\Hom_H(1, -)\in \Zc_\hmod Fun(\tinvmod, \vect).$$
\end{proposition}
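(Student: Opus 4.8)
The plan is to exhibit, for every $c \in \hmod$ and every functor in $Fun(\tinvmod,\vect)$ built from $\Hom_H(1,-)$, a natural isomorphism $\iota^F_c$ intertwining the two $\Cc$-actions on $Fun(\tinvmod,\vect)$, and then to check naturality and the hexagon/compatibility axioms that make it an element of the center. Concretely, recall that on $Fun(\tinvmod,\vect)$ the right action is $F\cdot c(-) = F(c\cdot -)$ and the left action is $c\cdot F(-) = F(-\cdot c)$, where the dots are the bimodule actions of $\tinvmod$: $c\cdot M\cdot d = {^\# c}\ot M\ot d$ (left action twisted by $S^{-2}$, i.e. by ${}^\#(-)$). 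So for $F=\Hom_H(1,-)$ I must produce a natural isomorphism
$$\iota_c: \Hom_H(1, -\ot c)\ \xrightarrow{\ \sim\ }\ \Hom_H(1, {^\# c}\ot -),$$
natural in the object of $\tinvmod$ plugged into the blank. This is precisely the non-rigid analogue of the adjunction isomorphism \eqref{adj1}; the point of working with $\Hom_H(1,-)$ rather than a genuine dual is that we can describe the isomorphism by an explicit formula on invariant elements instead of invoking rigidity.

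The key step is Lemma \ref{cov-H-mod}: for $A,B\in\hmod$ and $a\ot b\in A\ot B$, the element $a\ot b$ is $H$-invariant in $A\ot B$ iff $S(h)a\ot b = a\ot hb$ for all $h$. I would use this twice. First, given an invariant $a\ot b\in A\ot c$ (representing $f\in\Hom_H(1,A\ot c)$), I want to produce an invariant element of ${^\# c}\ot A$. The natural candidate is $b\ot a$ viewed in ${^\#c}\ot A$, where the $H$-action on the first factor is $h\cdot b = S^{-2}(h)b$. I need to check $b\ot a$ is $H$-invariant there, i.e. (by Lemma \ref{cov-H-mod} applied in ${^\#c}\ot A$, with $S$ replaced appropriately by the twisted action) that $S(h)\cdot_{\#} b\ot a = b\ot h a$, equivalently $S^{-1}(h)b\ot a = b\ot ha$; and this follows by applying $S^{-1}$ to the relation $S(h)a\ot b = a\ot hb$ from the invariance of $a\ot b$, after a relabeling $h\mapsto S(h)$ — so it reduces to $hb\ot a = b\ot S(h)a$... essentially the symmetric statement, which is exactly what Lemma \ref{cov-H-mod} gives. (This is the bookkeeping to be done carefully but it is not deep.) Defining $\iota_c(f)$ this way on invariant elements, one checks it is $H$-linear and bijective, with inverse given by the analogous flip using $S$ instead of $S^{-1}$.

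After the isomorphism is in hand, the remaining work is the axiom-checking that places $\iota$ in $\Zc_\hmod Fun(\tinvmod,\vect)$: (i) naturality of $\iota_c$ in the $\tinvmod$-argument — immediate, since the flip is natural; (ii) naturality in $c$ — for $g: c\to c'$ in $\hmod$, the square relating $\iota_c$ and $\iota_{c'}$ commutes, again because everything is a flip of tensor factors and $g$ is $H$-linear; (iii) the compatibility with the tensor product of $\hmod$, i.e. that $\iota_{c\ot c'}$ factors as the appropriate composite of $\iota_c$ and $\iota_{c'}$ (with the associators $\Fa$ inserted), which is the ``hexagon'' condition for a half-braiding on the functor category; and (iv) the unit condition $\iota_1 = F(\Fa^{1,0}_{0,1})$-type normalization. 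Steps (i), (ii), (iv) are routine. The main obstacle, and the only place real care is needed, is step (iii): one must track the $S^{-2}$-twist through a double tensor product and verify that the twisted left action on ${^\#(c\ot c')}$ matches ${^\#c}\ot{^\#c'}$ compatibly with the flip — i.e. that $\#$ is a \emph{monoidal} autoequivalence (which it is, since $S$ is an algebra anti-automorphism so $S^2$ is an algebra automorphism) and that this monoidal structure is the one implicitly used when forming $\tinvmod$ as a bimodule category. Once that identification is made explicit, the hexagon reduces to the pentagon for $\Fa$ together with the naturality already established, and the proof concludes.
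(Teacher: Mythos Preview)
Your proposal is correct and follows essentially the same route as the paper: define $\iota_c$ by the flip $a\ot b\mapsto b\ot a$ and invoke Lemma~\ref{cov-H-mod} to see that $a\ot b\in(A\ot c)^H$ iff $b\ot a\in({^\#c}\ot A)^H$; the paper phrases the key equivalence as ``$S(h)a\ot b=a\ot hb$ for all $h$ $\Leftrightarrow$ $S^{-1}(h)b\ot a=b\ot ha$ for all $h$'' (the clean substitution is $h\mapsto S^{-1}(h)$, not $h\mapsto S(h)$ as in your sketch) and then dismisses the remaining center axioms as ``not hard to check,'' whereas you spell out (i)--(iv) more explicitly.
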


\begin{proof}

Since
$$
  S(h)a\ot b= a\ot hb\quad\forall h\in H\quad\Leftrightarrow\quad S^{-1}(h)b\ot a=b\ot ha\quad\forall h\in H,
$$

so in view of the  Lemma \ref{cov-H-mod}, we see that $$a\ot b\in(A\ot B)^H\quad\Leftrightarrow\quad b\ot a\in({^\#B}\ot A)^H,$$
i.e,
 \begin{equation}\label{homcent} a\ot b\longmapsto b\ot a\end{equation} obviously gives rise to the following  natural isomorphisms

$$
  \Hom_H(1, -\ot B)\xrightarrow\sim \Hom_H(1,{^\#B}\ot -);
$$

it is not hard to check the rest given that the map itself is very simple.

\end{proof}

On to the third point: stability.  We need to check that  $\tau_0=Id$ if $M$ is $1$-stable.  Note that  strictly speaking $\tau_0$ depends on wether we want to use the theory on algebras or coalgebras, otherwise we might need its inverse. Yet $\tau_0=Id$ if and only if $\tau_0^{-1}=Id$; this is not surprising as the notion of a stable central pair doesn't depend on what you intend to use it for.

\begin{lemma}
If $M$ is $1$-stable then $\tau_0=Id$.
\end{lemma}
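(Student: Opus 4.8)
The plan is to trace through the definition of $\tau_0$ from Proposition \ref{trace1}, via the symmetric $2$-trace produced by Lemma \ref{pairtotrace} applied to the stable central pair $(\Hom_H(1,-), M)$, and then to unpack everything in terms of the explicit module/comodule data. Concretely, $C_0(A) = F(M\cdot A) = \Hom_H(1, M\ot A)$ (here and below I suppress the $S^{-2}$-twist on the left action since it is invisible at $A=1$ and plays no role once we pass to the explicit element description), and $\tau_0 = F(\Fa_{1,0}^{0,1})\circ \iota_A(M\cdot A^{\ot 0})$, which after unraveling the five-step chain of isomorphisms in Lemma \ref{pairtotrace} at $n=0$ reduces to the composite $F(\iota^m_A)\circ \iota^F_A(m)$ followed by the trivial unit isomorphisms. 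So the statement $\tau_0 = \Id$ is literally the stability axiom $F(\iota^m_A)\,\iota^F_A(m) = \Id_{F(m\cdot A)}$ of the pair, and what remains is to verify that this axiom holds for our specific $F$ and $M$ when $M$ is $1$-stable.

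First I would write down the two isomorphisms concretely at the level of invariant elements. By Proposition \ref{homcentral} (and Lemma \ref{cov-H-mod}), $\iota^F_A$ is induced by the flip $a\ot b \mapsto b\ot a$ carrying $(A\ot B)^H \to ({}^\# B \ot A)^H$; here it sends an invariant $m\ot a \in (M\ot A)^H$ to $a\ot m \in ({}^\# A\ot M)^H$. By Theorem \ref{ydcenter} (with $i=-1$), the central structure $\iota^m_A : {}^\# A\ot M \xrightarrow{\sim} M\ot A$ is, via \eqref{mcent}, $a\otimes m \mapsto m_{\langle 0\rangle}\otimes m_{\langle 1\rangle}a$, using the comodule structure coming from $M\in {}_H\mathcal{YD}^H_1$. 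Composing, $\tau_0$ sends an invariant element $m\otimes a$ to $m_{\langle 0\rangle}\otimes m_{\langle 1\rangle}a$.

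Next I would show this composite is the identity on $(M\ot A)^H$ when $M$ is $1$-stable. This is where the hypothesis enters, and it is the only substantive point. The idea: if $m\otimes a$ is $H$-invariant, then by Lemma \ref{cov-H-mod} we may rewrite $m_{\langle 1\rangle}a$ using the invariance to move the $H$-element $m_{\langle 1\rangle}$ (or an appropriate power of the antipode applied to it) from the $A$-factor back onto $M$, turning $m_{\langle 0\rangle}\otimes m_{\langle 1\rangle}a$ into $S^{?}(m_{\langle 1\rangle})m_{\langle 0\rangle}\otimes a$, at which point $1$-stability — in the form $S^{2}(m_{\langle 1\rangle})m_{\langle 0\rangle}=m$, or its odd-power avatar $S(m_{\langle 1\rangle})m_{\langle 0\rangle}=m$ from Lemma \ref{stability} — collapses it to $m\otimes a$. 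Getting the exact power of $S$ right is the bookkeeping one must do carefully: the $S^{-2}$-twist in $\tinvmod$, the direction of Lemma \ref{cov-H-mod}, and the precise comodule convention of ${}_H\mathcal{YD}^H_1$ all conspire, and I expect the correct matching to be the definition of ``$1$-stable'' rather than ``$0$-stable'' — indeed this is presumably why the authors introduced the shifted stability notion. So the main obstacle is purely the sign/power-of-antipode reconciliation; the conceptual content is a one-line application of stability after using $H$-invariance to transfer the comodule element across the tensor factor.

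Finally I would note (as the authors hint) that the argument is symmetric in the sense that $\tau_0=\Id \iff \tau_0^{-1}=\Id$, so it does not matter whether one is setting up the algebra or coalgebra version; checking it once for the composite above suffices.
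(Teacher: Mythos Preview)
Your proposal is correct and follows essentially the same approach as the paper. The paper's proof, in one sentence, computes $\tau_0(m\ot v)=m\ns{0}\ot m\ns{1}v$ using \eqref{homcent} followed by \eqref{mcent}, rewrites this as $S(m\ns{1})m\ns{0}\ot v$ via Lemma~\ref{cov-H-mod}, and then concludes by $1$-stability in the form $S(m\ns{1})m\ns{0}=m$ supplied by Lemma~\ref{stability}---exactly the ingredients and order you describe, including the antipode-power bookkeeping.
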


\begin{proof}
Using \eqref{homcent} followed by \eqref{mcent} we see that for $m\ot v\in(M\ot V)^H$ we have $\tau_0(m\ot v)=m\ns{0}\ot m\ns{1}v$ which is $S(m\ns{1})m\ns{0}\ot v$ by Lemma \ref{cov-H-mod}, and the latter is $m\ot v$ by the $1$-stability of $M$ in view of Lemma \ref{stability}.
\end{proof}

Thus $(\Hom_H(1,-), M)$ is a stable central pair.  As usual let $A$ be an algebra and $C$ a coalgebra. We write out the formulas for the cyclic and cocyclic module structures.

For $C_n=\Hom_H(1, M\ot A^{\ot n+1})$:
\begin{align*}
  \delta_i(m\ot a_0\ot \cdots \ot a_n)&=m\ot a_0\ot \cdots \ot a_i a_{i+1}\ot \cdots \ot a_n,\\
  \delta_n(m\ot a_0\ot \cdots \ot a_n)&=m\ns{0}\ot (m\ns{1}a_n) a_0\ot a_1\ot \cdots \ot a_{n-1},  \\
  \sigma_i(m\ot a_0\ot \cdots \ot a_n)&=m\ot a_0\ot \cdots \ot a_i \ot 1\ot \cdots \ot a_n,\\
  \tau_n(m\ot a_0\ot \cdots \ot a_n)&=m\ns{0}\ot m\ns{1}a_n\ot a_0\ot \cdots \ot a_{n-1}.
\end{align*}

For $C^n=\Hom_H(1, M\ot C^{\ot n+1})$:
\begin{align*}
  \delta_i(m\ot c_0\ot \cdots \ot c_{n-1})&=m\ot c_0\ot \cdots \ot c_i\ps{1}\ot c_i\ps{2}\ot \cdots \ot c_{n-1},\\
  \delta_n(m\ot c_0\ot \cdots \ot c_{n-1})&=m\ns{0}\ot  c_0\ps{2}\ot c_1\ot \cdots \ot c_{n-1}\ot S(m\ns{1})c_0\ps{1}, \\
  \sigma_i(m\ot c_0\ot \cdots \ot c_{n+1})&=m\ot c_0\ot \cdots \ot \varepsilon(c_{i+1})\ot \cdots \ot c_{n+1},\\
  \tau_n(m\ot c_0\ot \cdots \ot c_n)&=m\ns{0}\ot c_1\ot \cdots \ot c_{n}\ot S(m\ns{1})c_0.
\end{align*}

\subsection{The contravariant theory for $\hmod$}

Here we redo the previous section for the case of $\Hom_H(-,1)$.  More precisely, we show
 that if $M$ is a left-right $\mathcal{YD}_{-1}$ module ($\mathcal{AYD}$ module), then the  functor  $\Hom_H(-,1)$ paired with $M$ forms a central contrapair $\left(\Hom_H(-, 1), M\right)$ for $\mc=\thmod$. Furthermore if $M$ is $0$-stable (classically stable), then $\left(\Hom_H(-, 1), M\right)$ is a stable central contrapair.

We will need a characterization of $H$ module maps from $A\ot B$ to the monoidal unit $k$.
\begin{lemma}\label{H-maps}
Let $H$ be a Hopf algebra over a field $k$, $A, B\in \hmod$ and $f: A\ot B\rightarrow k$ a $k$-linear map. Then $f(h\ps{1}a\ot h\ps{2}b)=\varepsilon(h) f(a\ot b)$ if and only if $f(ha\ot b)=f(a\ot S(h)b)$.
\end{lemma}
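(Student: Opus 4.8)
The plan is to mimic the proof of Lemma \ref{cov-H-mod} almost verbatim, since the statement here is the ``dual'' version adapted to maps \emph{into} the unit object rather than \emph{out of} it. The key structural observation is that $f(h\ps{1}a\ot h\ps{2}b)=\varepsilon(h)f(a\ot b)$ just says $f$ is a morphism of $H$-modules $A\ot B\to k$, while $f(ha\ot b)=f(a\ot S(h)b)$ is the ``slide $h$ across by applying the antipode'' reformulation; the two should be equivalent by the usual antipode manipulations.

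First I would prove the easy direction. Assuming $f(ha\ot b)=f(a\ot S(h)b)$ for all $h$, compute
\begin{align*}
f(h\ps{1}a\ot h\ps{2}b)&=f(a\ot S(h\ps{1})h\ps{2}b)\\
&=f(a\ot \varepsilon(h)b)\\
&=\varepsilon(h)f(a\ot b),
\end{align*}
using first the hypothesis with $h$ replaced by $h\ps{1}$ (and keeping $h\ps{2}b$ in the second slot), then the antipode axiom $S(h\ps{1})h\ps{2}=\varepsilon(h)1$, then $k$-linearity. Note one must be slightly careful with the coproduct bookkeeping: the hypothesis $f(ha\ot b)=f(a\ot S(h)b)$ is applied with the ``$h$'' being $h\ps{1}$ while $b$ is replaced by $h\ps{2}b$, which is legitimate since the identity holds for all $h$ and all $a,b$.

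Next the converse direction, which is the one requiring a few more steps (and is the mild obstacle, though it is still routine). Assuming $f(h\ps{1}a\ot h\ps{2}b)=\varepsilon(h)f(a\ot b)$, I would insert a counit, expand the coproduct, and use the hypothesis:
\begin{align*}
f(ha\ot b)&=f(h\ps{1}\varepsilon(h\ps{2})a\ot b)\\
&=f(h\ps{1}a\ot \varepsilon(h\ps{2})b)\\
&=f(h\ps{1}a\ot h\ps{2}S(h\ps{3})b)\\
&=\varepsilon(h\ps{1})f(a\ot S(h\ps{2})b)\\
&=f(a\ot S(\varepsilon(h\ps{1})h\ps{2})b)\\
&=f(a\ot S(h)b),
\end{align*}
where the third equality uses $h\ps{2}S(h\ps{3})=\varepsilon(h\ps{2})1$ applied inside the second tensor slot, the fourth applies the $H$-linearity hypothesis to the pair $(h\ps{1}a, S(h\ps{2})b)$ — legitimate since it holds for all module elements — and the last two steps just collapse the counit. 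This completes both directions. The only thing to watch is the Sweedler-index matching in each line; everything else is formal. I would present it essentially as a two-part display computation, parallel in format to the proof of Lemma \ref{cov-H-mod}.
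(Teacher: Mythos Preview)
Your proof is correct and essentially identical to the paper's: the easy direction matches verbatim, and for the converse you run the very same chain of equalities as the paper, only in the reverse order (starting from $f(ha\ot b)$ instead of $f(a\ot S(h)b)$) with one extra intermediate step inserted at the start.
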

\begin{proof}
The computation is similar to that of Lemma \ref{cov-H-mod}. If $f(ha\ot b)=f(a\ot S(h)b)$ then
$f(h\ps{1}a\ot h\ps{2}b)=f(a\ot S(h\ps{1})h\ps{2}b)=\varepsilon(h) f(a\ot b).
 $
  Conversely if $f(h\ps{1}a\ot h\ps{2}b)=\varepsilon(h) f(a\ot b)$ then
  \begin{align*}
    f(a\ot S(h)b)&= f(a\ot S(\varepsilon(h\ps{1})h\ps{2})b)\\
    &= \varepsilon(h\ps{1})f(a\ot S(h\ps{2})b)\\
    &= f(h\ps{1}a\ot h\ps{2} S(h\ps{3})b)\\
    &= f(h\ps{1}a\ot \varepsilon(h\ps{2})b)\\
    &= f(ha\ot b).
  \end{align*}

\end{proof}
\begin{proposition}\label{homcentral1}
$$\Hom_H(-, 1)\in \Zc_{\hmod_{op}} Fun(\thmod_{op}, \vect).$$
\end{proposition}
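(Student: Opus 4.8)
The plan is to mirror the proof of Proposition \ref{homcentral} using Lemma \ref{H-maps} in place of Lemma \ref{cov-H-mod}. Recall that, by definition, $\Hom_H(-,1)\in\Zc_{\hmod_{op}}Fun(\thmod_{op},\vect)$ means precisely that $\Hom_H(-,1)$, regarded as a functor on $\hmod_{op}$ whose bimodule category is $\thmod_{op}$, is a $2$-trace in the sense of the covariant setup applied to $\Cc_{op}=\hmod_{op}$. So what must be exhibited is a family of natural isomorphisms
$$\Hom_H(B\ot -,1)\xrightarrow{\ \sim\ }\Hom_H(-\ot B^{\#},1),$$
where the twist by $B^{\#}$ records the fact that the left $\Cc$-action on $\thmod$ is the one modified by $\#=S^2$, and the passage to $(-)_{op}$ accounts for the reversal of $\ot$; unravelling Definition \ref{stablecontrapair} and the identification $\mc_{op}$ being a $\Cc_{op}$-bimodule category, this is the correct source and target. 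The first step is therefore to pin down exactly which twisted Hom-space is the source and which is the target, so that the reversal of arrows in $\Cc_{op}$ and the $S^2$-twist in $\thmod$ are bookkept consistently; this is the only genuinely delicate point.

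Next I would supply the map itself. By Lemma \ref{H-maps}, a $k$-linear $f:A\ot B\to k$ is an $H$-module map (i.e. $f(h\ps{1}a\ot h\ps{2}b)=\varepsilon(h)f(a\ot b)$) if and only if $f(ha\ot b)=f(a\ot S(h)b)$. Applying this characterization twice, once in each tensor factor, gives: $f\in\Hom_H(A\ot B,1)$ iff $f(ha\ot b)=f(a\ot S(h)b)$ iff the flipped functional $\bar f(b\ot a):=f(a\ot b)$ satisfies $\bar f(h b\ot a)=f(a\ot S(h)b)$... — more carefully, the identity $f(ha\ot b)=f(a\ot S(h)b)$ for all $h$ is equivalent, after substituting $h\mapsto S^{-1}(h)$ and relabelling, to $f(S^2(h)a\ot b)=f(a\ot S(h)b)$, hence to the statement that $b\ot a\mapsto f(a\ot b)$ is an $H$-module map $B^{\#}\ot A\to k$, where the left action on the first slot is twisted by $S^2$ precisely as in $\thmod$. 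Thus
$$f\longmapsto\big(b\ot a\mapsto f(a\ot b)\big)$$
is the desired isomorphism; its inverse is the analogous flip, and it is visibly natural in the remaining variable $A$, compatible with the left and right $\hmod$-actions, and the coherence (hexagon/compatibility) axioms for membership in $\Zc_{\Cc}Fun(\mc,\vect)$ hold because the map is just the tensor-flip, which strictly intertwines all the associativity constraints.

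The main obstacle, as in Proposition \ref{homcentral}, is not the computation — which is a one-line application of Lemma \ref{H-maps} plus the antipode axiom — but the careful verification that the $S^2$-twist appearing in $\thmod$ is exactly the twist forced by iterating Lemma \ref{H-maps}, and that the opposite-category conventions (reversed tensor, reversed arrows, $\vect_{op}$ as target) are threaded through correctly so that ``central pair for $\Cc_{op}$'' unwinds to this concrete statement. Once the dictionary is fixed I would simply note, as the proof of Proposition \ref{homcentral} does, that ``it is not hard to check the rest given that the map itself is very simple,'' i.e. naturality and the bimodule-compatibility conditions are immediate from the fact that the isomorphism is the evident flip of tensor factors.
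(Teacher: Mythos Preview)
Your approach is essentially the paper's: define the centrality isomorphism as the tensor flip and verify via Lemma \ref{H-maps}. The paper's one-line proof simply observes that for $f\in\Hom_H(A^{\#}\ot B,1)$ the flip $\gamma f(b\ot a)=f(a\ot b)$ lies in $\Hom_H(B\ot A,1)$, which is exactly your $\bar f$ read in the other direction.

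Two small bookkeeping slips to fix. First, your initially stated target $\Hom_H(-\ot B^{\#},1)$ has the twist on the wrong side; unwinding the $\thmod_{op}$ bimodule structure gives $\Hom_H(-\ot B,1)\simeq\Hom_H(B^{\#}\ot -,1)$, which is indeed where your computation lands (and agrees with the paper's $\Hom_H(A^{\#}\ot B,1)\simeq\Hom_H(B\ot A,1)$). Second, the substitution you want is $h\mapsto S(h)$, not $h\mapsto S^{-1}(h)$: from $f(ha\ot b)=f(a\ot S(h)b)$ one gets $f(S(h)a\ot b)=f(a\ot S^{2}(h)b)$, which is precisely the condition for $\bar f\in\Hom_H(B^{\#}\ot A,1)$. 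With these corrections your argument matches the paper's.
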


\begin{proof}
This boils down to observing that for $f\in\Hom_H(A^\#\ot B,1)$ we have $\gamma f\in\Hom_H(B\ot A,1)$ where $\gamma f(b\ot a)=f(a\ot b)$.  This is an immediate consequence of Lemma \ref{H-maps}.

\end{proof}

We recall that by Theorem \ref{ydcenter}, if $M\in\mathcal{AYD}$, i.e., $M\in\mathcal{YD}_{-1}$ then $M\in\Zc_{\hmod}(\thmod)$ and the latter is equivalent to $\Zc_{\hmod_{op}}(\thmod_{op})$.  So all that remains is to investigate the stability condition needed for $\tau_0=Id$.

\begin{lemma}
If $M$ is $0$-stable (classically stable) then $\tau_0=Id$.
\end{lemma}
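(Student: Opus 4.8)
The plan is to mirror the proof of the previous lemma in Section \ref{2traceforhmod} (the one showing that $1$-stability of $M$ forces $\tau_0=\Id$), but now working in the contravariant setting with the bimodule category $\thmod$ and the functor $\Hom_H(-,1)$. First I would recall that $\tau_0$ acts on $C^0=\Hom_H(M\ot A, 1)$ (equivalently, after the obvious reversals, on $\Hom_H(M\ot C,1)$; by the remark following Proposition \ref{trace4} whether one takes $\tau_0$ or $\tau_0^{-1}$ is immaterial since $\tau_0=\Id\Leftrightarrow\tau_0^{-1}=\Id$, and $\tau_0$ does not depend on the algebra/coalgebra structure). So it suffices to compute $\tau_0$ on an element $f\in\Hom_H(M\ot V,1)$ for an arbitrary $V\in\hmod$ and check that it returns $f$.

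The key steps, in order: (i) unwind the definition of $\tau_0$ for a stable central contrapair from the $\tau$ formula displayed at the end of Section on stable central contrapairs, namely $\tau_0\colon F(m\cdot V)\to F(V\cdot m)\to F(m\cdot V)$, where the first arrow is induced by $(\iota^m_V)^{-1}$ (the AYD central structure of Theorem \ref{ydcenter}) and the second by the functorial centrality isomorphism $\iota^F_V$ of Proposition \ref{homcentral1}; (ii) substitute the explicit formulas: by \eqref{mcent} and its inverse $\Phi^{-1}(m\ot v)=S(m\ns{1})v\ot m\ns{0}$, the central structure $\iota^m_V$ of the AYD module $M$ (here $i=-1$) is $v\ot m\mapsto m\ns{0}\ot m\ns{1}v$; and by Proposition \ref{homcentral1}, $\iota^F_V$ sends $f\mapsto \gamma f$ with $(\gamma f)(b\ot a)=f(a\ot b)$; (iii) compose these to get that $\tau_0(f)(m\ot v)= f(m\ns{0}\ot m\ns{1}v)$, and then rewrite the right-hand side. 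Using Lemma \ref{H-maps} (with $h=m\ns{1}$, sliding the action across to the first tensor factor) this becomes $f(S(m\ns{1})m\ns{0}\ot v)$, which by $0$-stability $S(m\ns{1})m\ns{0}=m$ (using Lemma \ref{stability} to pass between $S^0$ and $S^{-1}$, i.e.\ classical stability) equals $f(m\ot v)$. Hence $\tau_0(f)=f$, i.e.\ $\tau_0=\Id$.

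I do not expect a serious obstacle here; this is essentially a bookkeeping computation parallel to the covariant case. The one point requiring care is matching up the direction of the two structure isomorphisms correctly: in the contravariant/contrapair case one uses the functorial centrality $\iota^F$ and the element centrality $\iota^m$ in the order dictated by the displayed $\tau$ in the stable central contrapair subsection (which is the reverse of the covariant order), and one must use $\iota^m_V$ rather than its inverse. A secondary subtlety is the use of the correct instance of Lemma \ref{H-maps}: one applies it to convert $f(m\ns{0}\ot m\ns{1}v)$ into $f(S(m\ns{1})m\ns{0}\ot v)$, which is exactly the substitution $f(a\ot hb)=f(S(h)a\ot b)$ read in reverse, legitimate because $f$ is an $H$-module map. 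Once these are lined up, $0$-stability does the rest.
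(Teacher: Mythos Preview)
Your approach is the same as the paper's: unwind $\tau_0$ on $f\in\Hom_H(M\ot V,1)$, slide the $H$-action across the tensor via Lemma~\ref{H-maps}, and finish with stability.

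There is one bookkeeping slip to fix. Lemma~\ref{H-maps} reads $f(ha\ot b)=f(a\ot S(h)b)$, hence $f(a\ot hb)=f(S^{-1}(h)a\ot b)$, \emph{not} $f(S(h)a\ot b)$. So from your (correct) formula $\tau_0 f(m\ot v)=f(m\ns{0}\ot m\ns{1}v)$ you obtain $f(S^{-1}(m\ns{1})m\ns{0}\ot v)$. Correspondingly, Lemma~\ref{stability} with $i=0$ says that $0$-stability is equivalent to $S^{-1}(m\ns{1})m\ns{0}=m$; the identity $S(m\ns{1})m\ns{0}=m$ that you invoked is $1$-stability. With both signs corrected the argument goes through exactly as you intended. (Also, in your step (i) you say the first arrow is induced by $(\iota^m_V)^{-1}$, but since $F$ is contravariant one applies $F(\iota^m_V)$, as you in fact do in step (ii) and reiterate at the end.)

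For comparison, the paper's own one-line proof computes $\tau_0 f(m\ot v)=f(m\ns{0}\ot S(m\ns{1})v)=f(m\ns{1}m\ns{0}\ot v)=f(m\ot v)$, using classical stability directly without Lemma~\ref{stability}. Your formula for $\tau_0$ agrees with the displayed $\tau_n$ at the end of the section; the discrepancy with the paper's proof is only $\tau_0$ versus $\tau_0^{-1}$, which, as you already note, is immaterial.
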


\begin{proof}
Let $f\in\Hom_H(M\ot V,1)$, then $\tau_0 f(m\ot v)=f(m\ns{0}\ot S(m\ns{1})v)=f(m\ns{1}m\ns{0}\ot v)=f(m\ot v)$.
\end{proof}

Thus $(\Hom_H(-,1), M)$ is a stable central contrapair.  As usual let $A$ be an algebra and $C$ a coalgebra. We write out the formulas for the cocyclic and cyclic module structures.

For $C^n=\Hom_H(M\ot A^{\ot n+1},1)$:
\begin{align*}
  \delta_if(m\ot a_0\ot \cdots \ot a_n)&=f(m\ot a_0\ot \cdots \ot a_i a_{i+1}\ot \cdots \ot a_n),\\
  \delta_nf(m\ot a_0\ot \cdots \ot a_n)&=f(m\ns{0}\ot (m\ns{1}a_n) a_0\ot a_1\ot \cdots \ot a_{n-1}),  \\
  \sigma_if(m\ot a_0\ot \cdots \ot a_n)&=f(m\ot a_0\ot \cdots \ot a_i \ot 1\ot \cdots \ot a_n),\\
  \tau_nf(m\ot a_0\ot \cdots \ot a_n)&=f(m\ns{0}\ot m\ns{1}a_n\ot a_0\ot \cdots \ot a_{n-1}).
\end{align*}

For $C_n=\Hom_H(M\ot C^{\ot n+1},1)$:
\begin{align*}
  \delta_if(m\ot c_0\ot \cdots \ot c_{n-1})&=f(m\ot c_0\ot \cdots \ot c_i\ps{1}\ot c_i\ps{2}\ot \cdots \ot c_{n-1}),\\
  \delta_nf(m\ot c_0\ot \cdots \ot c_{n-1})&=f(m\ns{0}\ot  c_0\ps{2}\ot c_1\ot \cdots \ot c_{n-1}\ot S(m\ns{1})c_0\ps{1}), \\
  \sigma_if(m\ot c_0\ot \cdots \ot c_{n+1})&=f(m\ot c_0\ot \cdots \ot \varepsilon(c_{i+1})\ot \cdots \ot c_{n+1}),\\
  \tau_nf(m\ot c_0\ot \cdots \ot c_n)&=f(m\ns{0}\ot c_1\ot \cdots \ot c_{n}\ot S(m\ns{1})c_0).
\end{align*}

In this paper we have investigated the four (co)homology theories that arise naturally in the consideration of the monoidal category of left $H$-modules.  These come from the considerations of the covariant and the contravariant theories in the sense of their behavior with respect to maps of (co)algebras.    If we consider the contravariant theory of the algebra case, we recover the type $A$ cohomology theory of \cite{hkrs2} on the nose.  By considering the covariant theory of the coalgebra case we obtain a different cohomology theory than that of \cite{hkrs2}; this explains the need for \emph{new} coefficients of opposite ``charge" than AYD.   The type $C$ theory which generalizes Connes-Moscovici Hopf cyclic cohomology \cite{CM98} is actually obtained from the $2$-contratrace giving the type $A$ theory by considering its predual which is a $2$-trace. The other two possibilities considered are both homology theories, one requiring AYDs and the other ``anti" AYDs.

Our explicit calculations do not extend to the type $B$ theory of \cite{hkrs2} which is a contravariant cohomology theory for $H$ \emph{comodule} algebras.  However we point out that our machinery can be applied to the monoidal category ${^H\mc}$ of left comodules over $H$.  In that case the type $B$ theory is a straightforward consequence, though with modifications.  

Let us summarize. If we are given a rigid monoidal category $\Cc$ then there is a covariant cyclic theory with coefficients in $\Zc_\Cc\Cc^{**}$ that turns algebras into cyclic modules and coalgebras into cocyclic modules. There is also a contravariant cyclic theory with coefficients in $\Zc_\Cc{^{**}\Cc}$ that turns algebras into cocyclic modules and coalgebras into cyclic modules.  In the above $^{**}$ is the functor that sends $c\in\Cc$ to $c^{**}$. Note that the coefficients need to be more than just central in a correct bimodule category, they have to be stable as well.  If the category is not rigid \emph{anymore}, such as was the case of the general $\hmod$ with infinite dimensional representations allowed, we can still proceed: we would need a replacement for $(-)^{**}$, such as $\#$ was in the case of $\hmod$.

%%%%%%%%%%%%%%%%%%%%%%%%%%%%%%%%%%%%%%%%%%%%%%%%%%%%%%%%%%%%%%%%%%%%%%%%%%%%%%%%%%%%%%%%%%%%%%%%%%%%%%

\bigskip

\noindent Department of Mathematics and Statistics,
University of Windsor, 401 Sunset Avenue, Windsor, Ontario N9B 3P4, Canada

\noindent\emph{E-mail address}:
\textbf{mhassan@uwindsor.ca}
\medskip

\noindent Department of Mathematics,
Middlesex College,
The University of Western Ontario,

London, Ontario, N6A 5B7,
Canada.

\noindent\emph{E-mail address}:
\textbf{masoud@uwo.ca}
\medskip

\noindent Department of Mathematics and Statistics,
University of Windsor, 401 Sunset Avenue, Windsor, Ontario N9B 3P4, Canada

\noindent\emph{E-mail address}:
\textbf{ishapiro@uwindsor.ca}
\medskip

\end{document}